\documentclass{amsart}

\usepackage{amsthm,amssymb}
\usepackage[pagebackref,colorlinks,linkcolor=red,citecolor=blue,urlcolor=blue,hypertexnames=false]{hyperref}
\usepackage{amsrefs}
\usepackage{amscd}

\input xypic
\xyoption{all}
\newdir{ >}{{}*!/-5pt/@{>}}

\renewcommand{\top}{\rm top}
\newcommand{\Tr}{\mathrm{Tr}}
\newcommand{\rel}{\rm rel}
\newcommand{\reg}{\mathrm{reg}}
\newcommand{\dif}{\rm diff}

\newcommand{\CT}{\kappa}
\newcommand{\Gal}{\mathrm{Gal}}
\newcommand{\con}{\mathrm{con}}

\def\cB{\mathcal B}

\def\cE{\mathcal E}

\def\cF{\mathcal F}
\def\fA{\mathfrak A}

\def\cG{\mathcal G}

\def\cL{\mathcal L}

\def\cO{\mathcal O}

\def\nil{\mathrm{nil}}

\def\hotimes{\hat{\otimes}}
\newcommand{\C}{\mathbb{C}}
\def\cotimes{\otimes_\C }

\newcommand{\Q}{\mathbb{Q}}
\newcommand{\R}{\mathbb{R}}

\newcommand{\fin}{\mathcal{F}in}

\newcommand{\fcyc}{\mathcal{F}cyc}
\newcommand{\cyc}{\mathcal{C}yc}

\newcommand{\Z}{\mathbb{Z}}

\newcommand{\hofi}{\mathrm{hofiber}}
\newcommand{\hocofi}{\mathrm{hocofiber}}

\newcommand{\org}{\mathrm{Or}G}

\def\bu{\bullet}

\def\colim{\operatornamewithlimits{colim}}

\def\map{\operatorname{map}}
\def\lra{\longrightarrow}

\def\iso{\stackrel{\cong}\lra}
\def\triqui{\vartriangleleft}
\def\weq{\overset\sim\lra}

\def\ct{\tau}
\def\mk{\mu}
\numberwithin{equation}{section}
\theoremstyle{plain}
\newtheorem{thm}[equation]{Theorem}
\newtheorem{coro}[equation]{Corollary}
\newtheorem{lem}[equation]{Lemma}
\newtheorem{prop}[equation]{Proposition}

\newcommand{\comment}[1]{}  

\theoremstyle{definition}

\theoremstyle{remark}
\newtheorem{rem}[equation]{Remark}
\newtheorem{ex}[equation]{Example}


\begin{document}

\title{Trace class operators, regulators, and assembly maps in $K$-theory}
\author{Guillermo Corti\~nas}
\email{gcorti@dm.uba.ar}\urladdr{http://mate.dm.uba.ar/\~{}gcorti}
\author{Gisela Tartaglia}
\email{gtartag@dm.uba.ar}
\address{Dep. Matem\'atica-IMAS, FCEyN-UBA\\ Ciudad Universitaria Pab 1\\
1428 Buenos Aires\\ Argentina}
\thanks{The first author was partially supported by MTM2012-36917-C03-02. Both authors
were supported by CONICET, and partially supported by
grants UBACyT 20020100100386 and PIP 112-200801-00900.}

\maketitle

\section{Introduction}
Let $G$ be a group, $\fin$ the family of its finite subgroups, and $\cE(G,\fin)$ the classifying space. Let $\cL^1$ be the algebra of trace-class operators
in an infinite dimensional, separable Hilbert space over the complex numbers. Consider the rational
assembly map in homotopy algebraic $K$-theory
\begin{equation}\label{intro:asskh}
H_p^G(\cE(G,\fin),KH(\cL^1))\otimes\Q\to KH_p(\cL^1[G])\otimes\Q. 
\end{equation}
The rational $KH$-isomorphism conjecture (\cite{balukh}*{Conjecture 7.3}) predicts that \eqref{intro:asskh} is
an isomorphism; it follows from a theorem of Yu (\cite{yu}, \cite{cortar}) that it is always injective. In the current
article we prove the following. 

\begin{thm}\label{thm:intromain}
Assume that \eqref{intro:asskh} is surjective. Let $n\equiv p+1\mod 2$.
Then:
\item[i)] The rational assembly map for the trivial family 
\begin{equation}\label{intro:noviz}
H_{n}^G(\cE(G,\{1\}),K(\Z))\otimes\Q\to K_{n}(\Z[G])\otimes\Q
\end{equation}
is injective.
\item[ii)] For every number field $F$, the rational assembly map
\begin{equation}\label{intro:assF}
H_{n}^G(\cE(G,\fin),K(F))\otimes\Q\to K_{n}(F[G])\otimes\Q 
\end{equation}
is injective. 
\end{thm}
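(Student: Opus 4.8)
The plan is to compare the assembly map \eqref{intro:assF} with \eqref{intro:asskh} through a regulator and to exploit that, under the hypothesis, the latter is a rational isomorphism. First I would produce, for each number field $F$, a natural transformation of functors from rings to spectra
\[
\rho\colon K(F)\to \Sigma\,KH(\cL^1),
\]
the \emph{regulator}, built from the archimedean embeddings $F\hookrightarrow\C$ together with the trace/Chern character comparing the algebraic $K$-theory of $\cL^1$ with its (periodic) cyclic homology. On homotopy groups $\rho$ induces $K_n(F)\otimes\Q\to KH_{n-1}(\cL^1)\otimes\Q$; the shift by one in homological degree is exactly what produces the parity constraint $n\equiv p+1\pmod 2$ in the statement. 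For part (i) one uses instead the composite $K(\Z)\to K(\Q)\xrightarrow{\rho}\Sigma\,KH(\cL^1)$, remembering that here the trivial family must in addition be relaxed to $\fin$.

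Because the Davis--L\"uck assembly map is natural in the coefficient functor, $\rho$ induces a commutative square whose horizontal maps are \eqref{intro:assF} in degree $n$ and \eqref{intro:asskh} in degree $p=n-1$, and whose vertical maps are induced by $\rho$. By the cited theorem of Yu the map \eqref{intro:asskh} is rationally injective, and by hypothesis it is rationally surjective; since $KH_*(\cL^1)\otimes\Q$ is $2$-periodic and assembly is compatible with the periodicity operator, surjectivity in the single degree $p$ propagates to every degree of the same parity, so \eqref{intro:asskh} is a rational isomorphism in all degrees $\equiv p\pmod 2$. Hence the bottom map of the square is invertible, and a diagram chase (using $g\alpha=\gamma f$ with $\gamma$ invertible) reduces the injectivity of the top map to the rational injectivity of the left vertical map
\[
H_n^G(\cE(G,\fin),K(F))\otimes\Q\to H_{n-1}^G(\cE(G,\fin),KH(\cL^1))\otimes\Q
\]
induced by $\rho$. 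For \eqref{intro:noviz} the same chase applies after relaxing $\{1\}\subseteq\fin$; rationally this relaxation is split injective, since the equivariant Chern character exhibits $H^G_*(\cE(G,\{1\}),-)$ as the summand of $H^G_*(\cE(G,\fin),-)$ indexed by the trivial subgroup.

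Finally I would evaluate the left vertical map via L\"uck's rational equivariant Chern character, which expresses each side as a direct sum, over conjugacy classes of finite subgroups $H\le G$, of ordinary homology of the centralizers with coefficients in the homotopy groups of $K(F)$, respectively $KH(\cL^1)$, at $H$. As $\rho$ is a map of coefficient functors it acts summand by summand as group homology applied to the coefficient regulator $K_n(F)\otimes\Q\to KH_{n-1}(\cL^1)\otimes\Q$; since $\Q$ is flat, injectivity of the left vertical map follows from injectivity of this coefficient regulator, which in the degrees allowed by the parity condition is precisely Borel's theorem (the Borel regulator detects $K_n(F)\otimes\Q$ through the archimedean data, which the construction of $\rho$ realizes inside $KH_{n-1}(\cL^1)\otimes\Q$). \textbf{The main obstacle} is the construction of $\rho$ as an honest natural transformation compatible with assembly, together with the computation of $KH_*(\cL^1)$ and the verification that $\rho$ realizes the Borel regulator injectively; the equivariant bookkeeping via the Chern character is routine once the coefficient-level map is understood.
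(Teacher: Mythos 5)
Your overall architecture mirrors the paper's: compare the $K$-theoretic assembly map with the $KH(\cL^1)$-assembly map through a regulator, observe that under the hypothesis the latter is a rational isomorphism in all degrees of the parity of $p$ (Yu's injectivity, plus surjectivity, plus the $2$-periodicity of $KH_*(\cL^1[G])$ from \cite{CT}*{Theorem 6.5.3}), and reduce to the rational injectivity of an equivariant regulator computed summand by summand via L\"uck's Chern character. However, the proposal breaks down at its central point: a regulator $\rho\colon K(F)\to \Sigma KH(\cL^1)$ with rationally injective coefficient map does not exist. Since $KH_*(\cL^1)\cong K_*^{\top}(\C)$ (this equivalence is used in the paper's Proposition \ref{prop:compuhgctc}), the group $KH_{n-1}(\cL^1)\otimes\Q$ is $\Q$ for $n$ odd and $0$ for $n$ even. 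By Borel's theorem, for a real quadratic field $F$ one has $\dim_\Q K_5(F)\otimes\Q=r_1+r_2=2$, which cannot inject into $\Q$; and part (ii) quantifies over all number fields. The deeper reason is that the Borel regulator is a \emph{secondary} invariant: the primary comparison maps from algebraic $K$-theory of number rings to topological $K$-theory and to periodic cyclic homology are rationally trivial in positive degrees, so no target built from $KH(\cL^1)$ (equivalently $K^{\top}(\C)$) alone can see these classes. This is precisely why the paper's character $\tau$ is degree-preserving with values in the secondary theory
\begin{equation*}
\CT(A)=\hocofi\bigl(KH(\cL^1\cotimes A)[+1]\xrightarrow{\ \Tr Sch\ } HC(A/\C)[-1]\bigr),
\end{equation*}
whose coefficients $\CT_n(\C)=MK_n(\C)\cong\C^*$ (for odd $n\ge 1$) are large enough to contain all Borel classes (Karoubi's Theorem \ref{thm:kar} and Lemma \ref{lem:regcyc}); the parity shift $n\equiv p+1$ then comes from the connecting map $\Tr Sch\colon KH_{n+1}\to HC_{n-1}$ in the defining fibration, not from a degree-shifting regulator.

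There is a second, consequent gap: once the target is corrected to a theory containing cyclic homology, the injectivity of the bottom row of your square is no longer a formal consequence of naturality and the invertibility of the $KH(\cL^1)$-assembly map, because the cyclic homology assembly map is very far from being an isomorphism. The paper handles this in Section \ref{sec:equi}: the $HC(\C/\C)$-assembly map admits a natural retraction $\pi_m$, and Proposition \ref{prop:equiva} shows that rational injectivity of the $\CT$-assembly map is \emph{equivalent} to an image-comparison condition between $\pi_m\Tr Sch$ on $KH_{n+1}(\cL^1[G])$ and the equivariant $\Tr Sch$; only then does surjectivity of \eqref{intro:asskh} yield injectivity of the $\CT$-assembly map (Corollary \ref{coro:fjct}). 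Your diagram chase would need this additional argument even after the target of the regulator is fixed.
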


We remark that the $K$-theory Novikov conjecture asserts that part i) of the theorem above holds for all $G$, and that part ii) is equivalent to the rational injectivity part of the $K$-theory Farrell-Jones conjecture for number fields (\cite{LR2}*{Conjectures 1.51 and 2.2 and Proposition 2.14}).  

The idea of the proof of Theorem \ref{thm:intromain} is to use an algebraic, equivariant version of Karoubi's multiplicative $K$-theory.
The latter theory assigns groups $MK_n(\fA)$ $(n\ge 1)$ to any unital Banach algebra $\fA$, which fit into a long
exact sequence 
\[
HC^{\top}_{n-1}(\fA)\to MK_n(\fA)\to K_n^{\top}(\fA)\overset{Sch^{\top}_{n}}\to HC^{\top}_{n-2}(\fA).    
\]
Here $HC^{\top}$ is the cyclic homology of the completed cyclic module $C^{\top}_n(\fA)=\fA\hotimes\dots\hotimes\fA$ ($n+1$ factors), 
$ch^{\top}$ is the Connes-Karoubi Chern character with values in its periodic cyclic homology $HP_*^{\top}(\fA)$, and $S$ is the
periodicity operator. Karoubi introduced a multiplicative Chern character
\begin{equation}\label{intro:mu}
\mu_n:K_n(\fA)\to MK_n(\fA).
\end{equation}
In particular if $\cO$ is the ring of integers in a number field $F$ one can consider the composite  
\begin{equation}\label{intro:reg}
K_n(\cO)\to K_n(\C)^{\hom(F,\C)}\to MK_n(\C)^{\hom(F,\C)}.
\end{equation}
By comparing this map with the Borel regulator, Karoubi showed in \cite{karast} that \eqref{intro:reg} is rationally injective.
It follows that 
\begin{equation}\label{intro:zreg}
K_n(\Z)\to MK_n(\C)
\end{equation}
is rationally injective. In the current paper we assign, to every unital $\C$-algebra $A$, groups $\CT_n(A)$ $(n\in \Z)$ which fit into a long
exact sequence  
\[
HC_{n-1}(A/\C)\to \CT_n(A)\to KH_n(\cL^1\cotimes A )\overset{{\Tr} Sch_n}\to HC_{n-2}(A/\C).
\]
Here $HC(/\C)$ is algebraic cyclic homology of $\C$-algebras, $ch$ is the algebraic Connes-Karoubi Chern character and $\Tr$ is induced
by the operator trace. We also introduce a character
\begin{equation}\label{intro:ct}
\ct_n:K_n(A)\to \CT_n(A)\ \ (n\in\Z).
\end{equation}
If $\fA$ is a finite dimensional Banach algebra and  $n\ge 1$ then
$\CT_n(\fA)=MK_n(\fA)$ and \eqref{intro:mu} identifies with \eqref{intro:ct} (Proposition \ref{prop:mk=ct}). Both $\CT$ and $\tau$ have equivariant
versions, so that if $X$ is a $G$-space and $A$ is a $\C$-algebra, we have an assembly map
\[
H^G_n(X,\CT(A))\to \CT_n(A[G]).
\]
Let $\fcyc$ be the family of finite cyclic subgroups. We show
in Proposition \ref{prop:compuhgctc} that the map
\begin{equation*}\label{intro:fcycfin}
H_n^G(\cE(G,\fcyc),\CT(\C))\to H_n^G(\cE(G,\fin),\CT(\C))
\end{equation*}
is an isomorphism, and compute $H_n^G(\cE(G,\fcyc),\CT(\C))\otimes\Q$ in terms of the finite cyclic subgroups of $G$.
We use this and the rational injectivity of \eqref{intro:zreg} to show, in Proposition \ref{prop:monoequiregZ}, that
the map
\begin{multline}\label{intro:assnovi}
H_n^G(\cE(G,\{1\}),K(\Z))\to H_n^G(\cE(G,\fin),K(\C))
\overset{\tau}\to H_n^G(\cE(G,\fin),\CT(\C))
\end{multline}
is rationally injective. It is well-known \cite{LR2}*{Proposition 2.20} that the map
\[
H^G_n(\cE(G,\fcyc),K(R))\otimes\Q\to H^G_n(\cE(G,\fin),K(R))\otimes\Q
\]
is an isomorphism for every unital ring $R$. In particular, we may substitute $\fcyc$ for $\fin$ in \eqref{intro:assF}. We use this together with Proposition \ref{prop:compuhgctc} and the rational injectivity of
\[K_n(F)\to K_n(\C)^{\hom(F,\C)}\to MK_n(\C)^{\hom(F,\C)}\] (see Remark \ref{rem:kareg}), to show in Proposition \ref{prop:monoequiregF} that if $m\ge 1$, $\cyc_m$ is the family of cyclic subgroups whose order divides $m$, and $\zeta_m$ is a primitive $m$-root of
$1$, then the composite
\begin{equation}\label{intro:assf}
\xymatrix{H_n^G(\cE(G,\cyc_m), K(F))\otimes\Q\ar[r]\ar@{.>}[dr]& H_n^G(\cE(G,\cyc_m),K(\C))^{\hom(F(\zeta_m),\C)}\otimes\Q\ar[d]^\tau\\
 &H_n^G(\cE(G,\fcyc),\CT(\C))^{\hom(F(\zeta_m),\C)}\otimes\Q}
\end{equation}
is injective. Since the map $\colim_m\cE(G,\cyc_m)\to \cE(G,\fcyc)$ is an equivalence, it follows that if the rational assembly map
\begin{equation}\label{intro:assct}
H_n^G(\cE(G,\fcyc),\CT(\C))\otimes\Q\to \CT_n(\C[G])\otimes\Q 
\end{equation}
is injective then so are both \eqref{intro:noviz} and \eqref{intro:assF}. We show in 
Corollary \ref{coro:fjct} that if \eqref{intro:asskh} is surjective, then \eqref{intro:assct} is injective for $n\equiv p+1\mod 2$.
This proves Theorem \ref{thm:intromain}. 

The rest of this paper is organized as follows. In Section \ref{sec:prelis} we define $\CT_n(A)$ and the map $\ct_n:K_n(A)\to\CT_n(A)$. By definition,
if $n\le 0$, then $\CT_n(A)=KH_n(A\cotimes\cL^1)$ and $\tau_n$ is the identity map \eqref{ctlow}. We show in Proposition
\ref{prop:mk=ct} that if $n\ge 1$ and $\fA$ is a finite dimensional Banach algebra, then $\CT_n(\fA)=MK_n(\fA)$ and $\ct_n=\mu_n$. Karoubi's regulators and his injectivity results are recalled in Theorem \ref{thm:kar}. We use Karoubi's theorem to prove, in Lemma \ref{lem:regcyc}, that if $F$ is a number field, $C$ a cyclic group of order $m$, $n$ a multiple of $m$, and $\zeta_n$ a primitive $n$-root of $1$, then the composite
\[
K_*(F[C])\to K_*(F(\zeta_n)[C])\to
 K_*(\C[C])^{\hom(F(\zeta_n),\C)}\overset{\mu}\to MK_*(\C[C])^{\hom(F(\zeta_n),\C)}
\]
is rationally injective. The main result of Section \ref{sec:compuhgctc} is Proposition \ref{prop:compuhgctc}, which computes $H_n^G(\cE(G,\fcyc),\CT(\C))\otimes\Q$ in terms of group homology and
of the groups $\CT_*(\C[C])$ for $C\in\fcyc$. The resulting formula is similar to existing formulas for equivariant $K$ and cyclic homology, which are used in its proof (\cite{corel}, \cite{luch}, \cite{lurela}, \cite{LR2},\cite{LR1}). In Section \ref{sec:equi} we show that the rational $\CT(\C)$-assembly map is injective whenever the rational $KH(\cL^1)$-assembly map is surjective (Corollary \ref{coro:fjct}). For this we use the fact that for every $m\ge 1$, the assembly map 
\[
H_*^G(\cE(G,\cyc_m),HC(\C/\C))\to HC_*(\C[G])
\]
has a natural left inverse $\pi_m$, which makes the following diagram commute
\[
\xymatrix{
H_*^G(\cE(G,\cyc_m),KH(\cL^1))\otimes\Q \ar[r]\ar[d]^{\Tr Sch}& KH_*(\cL^1[G])\otimes\Q\ar[d]^{{\Tr}Sch}\\
H_{*-2}^G(\cE(G,\cyc_m),HC(\C/\C))& HC_{*-2}(\C[G])\ar[l]_(0.3){\pi_m} .}
\]
Hence for every $n$ we have an inclusion
\begin{equation}\label{intro:inc}
\Tr Sch(H_{n+1}^G(\cE(G,\cyc_m),KH(\cL^1))\otimes\Q)\subset\pi_m{\Tr}Sch(KH_{n+1}(\cL^1[G])\otimes\Q).
\end{equation}
We show in Proposition \ref{prop:equiva} that the rational assembly map
\[
H_n^G(\cE(G,\cyc_m),\CT(\C))\otimes\Q\to \CT_n(\C[G])\otimes\Q
\]
is injective if and only if the inclusion \eqref{intro:inc} is an equality. Corollary \ref{coro:fjct} is immediate
from this. Section \ref{sec:regeq} is concerned with proving that \eqref{intro:assnovi} and \eqref{intro:assf} are injective (Propositions \ref{prop:monoequiregZ}
and \ref{prop:monoequiregF}). Finally in Section \ref{sec:compa} we show that if the identity holds in \eqref{intro:inc} for $m=1$ then \eqref{intro:noviz} is injective (Theorem \ref{thm:forz}) and that if it holds for $m$, then
\[
H_n^G(\cE(G,\cyc_m),K(F))\otimes\Q\to K_n(F[G])\otimes\Q
\]
is injective for every number field $F$ (Theorem \ref{thm:forf}).

\goodbreak
 
\section{The character \texorpdfstring{$\ct:K(A)\to \CT(A)$}{}}\label{sec:prelis}
\numberwithin{equation}{subsection}
\subsection{Definition of \texorpdfstring{$\ct$}{ct}}
Let $A$ be a $\C$-algebra and $k\subset \C$ a subfield. Write $C(A/k)$ for Connes' cyclic module and $HH(A/k)$, $HC(A/k)$, $HN(A/k)$ and $HP(A/k)$ for the associated Hochschild, cyclic, negative cyclic and periodic cyclic chain complexes. When $k=\Q$ we omit it from our notation; thus for example, $HH(A)=HH(A/\Q)$. As usual, we write $S$, $B$ and $I$ for the maps appearing in Connes' $SBI$ sequence. To simplify notation we shall make no distinction between a chain complex and the spectrum the Dold-Kan corresponce associates to it. We write $KH$ for Weibel's homotopy algebraic $K$-theory and $K^{\nil}$
for the fiber of the comparison map $K\to KH$. We have a map of fibration sequences \cite{friendly}*{\S11.3}
\[
 \xymatrix{K^{\nil}(A)\ar[r]\ar[d]_\nu &K(A)\ar[r]\ar[d]& KH(A)\ar[d]^{ch}\\
           HC(A)[-1]\ar[r]_B& HN(A)\ar[r]_I& HP(A).}
\]
Here $ch$ is the Connes-Karoubi character. Write $\cB$ for the algebra of bounded operators in an infinite dimensional, separable Hilbert space, and $\cL^1\triqui\cB$ for the ideal of trace class operators. Recall from \cite{cq} that $HP$ satisfies excision; in particular, the canonical map $HP(\cL^1\cotimes  A)\to HP(\cB\cotimes  A:\cL^1\cotimes A)$ is a quasi-isomorphism. We shall abuse notation and write $Sch$ for the map that makes the following diagram commute
\begin{equation}\label{map:sck}
\xymatrix{KH(\cL^1\cotimes  A)[+1]\ar[d]_{Sch}\ar[r]^{ch}& HP(\cL^1\cotimes  A)[+1]\ar[d]^\wr \\
HC(\cB\cotimes A:\cL^1\cotimes A)[-1]&HP(\cB\cotimes A:\cL^1\cotimes  A)[+1]\ar[l]^S.}
\end{equation}
By \cite{CT}*{Theorems 6.5.3 and 7.1.1}, the map $\nu:K^{nil}(\cB\cotimes A:\cL^1\cotimes A)\to HC(\cB\cotimes A:\cL^1\cotimes A)[-1]$ is an equivalence, and thus
the map $Sch$ fits into a fibration sequence
\begin{equation*}
KH(\cL^1\cotimes A)[+1]\overset{Sch}\longrightarrow HC(\cB\cotimes A:\cL^1\cotimes A)[-1]\to K(\cB\cotimes A:\cL^1\cotimes A).
\end{equation*}
On the other hand the operator trace $\Tr:\cL^1\to \C$ induces a map of cyclic modules
\begin{gather}\label{map:taucyc}
\Tr:C(\cB\cotimes A:\cL^1\cotimes A)\to C(A/\C)\\
\Tr(b_0\otimes a_0\otimes\dots\otimes b_n\otimes a_n)=\Tr(b_0\cdots b_n)a_0\otimes\dots\otimes a_n.\nonumber
\end{gather}
Note that $\Tr$ is defined on $b_0\cdots b_n$ since at least one of the $b_i$ is in $\cL^1$. In particular $\Tr$ induces a chain map
\begin{equation}\label{map:utauC}
\Tr:HC(\cB\cotimes A:\cL^1\cotimes A)\to HC(A/\C).
\end{equation}
We define $\CT(A)$ as the homotopy cofiber of the composite of \eqref{map:utauC} and the map $Sch$ of \eqref{map:sck}
\[
\CT(A):=\hocofi(KH(\cL^1\cotimes A)[+1]\overset{\Tr Sch}\longrightarrow HC(A/\C)[-1]).
\]
Thus because, by definition, cyclic homology vanishes in negative degrees, we have
\begin{equation}\label{ctlow}
\CT_n(A)=KH_n(\cL^1\cotimes A) \quad (n\le 0).
\end{equation}

By construction, there is an induced map $K(\cB\cotimes A:\cL^1\cotimes  A)\to \CT(A)$ which fits into a commutative diagram
\begin{equation}\label{diag:deftau}
\xymatrix{
KH(\cL^1\cotimes  A)[+1]\ar@{=}[d]\ar[r]^(0.451){Sch}& HC(\cB\cotimes A:\cL^1\cotimes  A)[-1]\ar[d]\ar[r]&K(\cB\cotimes A\negthickspace:\negthickspace\cL^1\cotimes  A)\ar[d]\\
KH(\cL^1\cotimes  A)[+1]\ar[r]&HC(A/\C)[-1]\ar[r]& \CT(A)}
\end{equation}
A choice of a rank one projection $p$ gives a map $A\to \cL^1\cotimes A$, $a\mapsto p\otimes a$, and therefore a map $K(A)\to K(\cB\cotimes A:\cL^1\cotimes  A)$. We shall be interested in the composite
\begin{equation}\label{map:ct}
\ct:K(A)\to K(\cB\cotimes A :\cL^1\cotimes  A)\to \CT(A).
\end{equation}

\subsection{Comparison with Karoubi's multiplicative Chern character}\label{subsec:karmch}
Suppose now that $\fA$ is a unital Banach algebra. Let $\Delta_\bu^{\dif}\fA=C^\infty(\Delta_\bu,\fA)$ be the simplicial algebra of $\fA$-valued $C^\infty$-functions on the standard simplices. 
Write $KV^{\dif}(\fA)$ for the diagonal of the bisimplicial space $[n]\mapsto BGL(\Delta_n^{\dif}\fA)$. We have
\[
K_n^{\top}(\fA)=\pi_nKV^{\dif}(\fA)\ \ (n\ge 1).
\]
Consider the fiber $\cF(\fA)=\hofi(BGL^+(\fA)\to KV^{\dif}(\fA))$. We have a homotopy fibration
\begin{equation}\label{karfib}
\Omega BGL^+(\fA)\to \Omega KV^{\dif}(\fA)\to \cF(\fA).
\end{equation}
Let $\hotimes$ be the projective tensor product of Banach spaces and let $C^{\top}(\fA)$ be the cyclic module with $C^{\top}(\fA)_n=\fA\hotimes\dots\hotimes \fA$ ($n+1$ factors). Write $HC^{\top}(\fA)$ and $HP^{\top}(\fA)$ for the cyclic and periodic cyclic complexes of $C^{\top}(\fA)$. In \cite{karmult}
(see also \cite{karast}*{\S7}), Max Karoubi constructs a map $ch^{\rel}:\cF(\fA)\to HC^{\top}(\fA)[-1]$ and defines his multiplicative $K$-groups  as the homotopy groups 
\[
MK_n(\fA)=\pi_{n}(\hofi(KV^{\dif}(\fA)\to HC^{\top}(\fA)[-2])) \quad (n\ge 1).
\]
He further defines the multiplicative Chern character as the induced map $\mu_n:K_n(\fA)\to MK_n(\fA)$ $(n\ge 1)$. 

\begin{prop}\label{prop:mk=ct}
Let $\fA$ be a unital Banach algebra, and let $n\ge 1$. Then there is a natural map $\CT_n(\fA)\to MK_n(\fA)$ which makes the following diagram commute
\[
\xymatrix{K_n(\fA)\ar[r]^{\ct_n}\ar[dr]_{\mk_n}& \CT_n(\fA)\ar[d]\\
                                            & MK_n(\fA).}
\]
If furthermore $\fA$ is finite dimensional, then $\CT_n(\fA)\to MK_n(\fA)$ is an isomorphism.
\end{prop}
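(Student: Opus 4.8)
The plan is to exhibit the comparison homomorphism $\CT_n(\fA)\to MK_n(\fA)$ as the map induced on homotopy (co)fibers by a natural transformation between the two defining fibration sequences
\[
KH(\cL^1\cotimes\fA)[+1]\overset{\Tr Sch}\lra HC(\fA/\C)[-1]\lra\CT(\fA),\qquad MK(\fA)\lra KV^{\dif}(\fA)\overset{Sch^{\top}}\lra HC^{\top}(\fA)[-2],
\]
and then to analyse this map degreewise. Concretely, I would first produce comparison maps on the two ``coordinate axes'' of these sequences and then check that they are compatible with the Chern-character maps $\Tr Sch$ and $Sch^{\top}$; the induced map on the third terms is the desired $\CT(\fA)\to MK(\fA)$, restricted to degrees $n\ge1$ where $KV^{\dif}$ computes topological $K$-theory.

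On the cyclic-homology side the inclusion of algebraic into projective tensor powers $\fA\cotimes\dots\cotimes\fA\hookrightarrow\fA\hotimes\dots\hotimes\fA$ is a map of cyclic modules and induces natural maps $HC(\fA/\C)\to HC^{\top}(\fA)$ and $HP(\fA/\C)\to HP^{\top}(\fA)$. On the $K$-theory side I would use that $\cL^1$-stable algebraic $K$-theory models topological $K$-theory: the canonical map $KH(\cL^1\cotimes\fA)\to K^{\top}(\cL^1\cotimes\fA)$, followed by the stability equivalence $K^{\top}(\cL^1\cotimes\fA)\weq K^{\top}(\fA)=\pi_*KV^{\dif}(\fA)$ valid in degrees $\ge1$, gives a natural map $KH(\cL^1\cotimes\fA)\to KV^{\dif}(\fA)$.

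The heart of the argument, and the step I expect to be the main obstacle, is to prove that these comparison maps intertwine the two Chern characters, i.e.\ that the square
\[
\xymatrix{
KH(\cL^1\cotimes\fA)[+1]\ar[r]^{\Tr Sch}\ar[d]& HC(\fA/\C)[-1]\ar[d]\\
KV^{\dif}(\fA)[+1]\ar[r]^{Sch^{\top}}& HC^{\top}(\fA)[-1]}
\]
commutes up to homotopy. The content is that the operator trace $\Tr\colon\cL^1\to\C$ collapses the $\cL^1$-stabilization compatibly with completion of cyclic homology. Unwinding the definition of $Sch$ through $ch$, excision for $HP$, and the periodicity operator $S$, the composite $\Tr\circ Sch$ is the trace of the algebraic Connes--Karoubi character of $\cL^1\cotimes\fA$, and this must be matched with Karoubi's relative character $ch^{\rel}$ on $\cF(\fA)$ under the equivalences above. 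I expect to reduce this to compatibility of the trace with excision together with a comparison of the algebraic and the smooth ($C^{\infty}$-simplicial) models for the Chern character, ultimately a statement about the canonical character on $\cL^1$ itself.

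Granting the square, the universal property of the homotopy cofiber produces the natural map $\CT_n(\fA)\to MK_n(\fA)$ for $n\ge1$, and commutativity of the triangle with $\ct_n$ and $\mk_n$ follows by naturality: the rank-one inclusion $a\mapsto p\otimes a$ induces on topological $K$-theory the canonical corner/stability equivalence, so the composite $K(\fA)\to K(\cB\cotimes\fA:\cL^1\cotimes\fA)\to\CT(\fA)\to MK(\fA)$ agrees with the map through which Karoubi defines $\mk$. Finally, for the finite-dimensional case I would invoke the five lemma: when $\fA$ is finite-dimensional the projective and algebraic tensor products coincide, so $C^{\top}(\fA)=C(\fA/\C)$ and the right-hand comparisons on $HC$ and $HP$ are isomorphisms, while the $K$-theory comparison $KH(\cL^1\cotimes\fA)\to KV^{\dif}(\fA)$ is an isomorphism in degrees $\ge1$ by \cite{CT} together with the stability of topological $K$-theory. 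Applying the five lemma to the resulting map of long exact sequences yields $\CT_n(\fA)\cong MK_n(\fA)$ for all $n\ge1$.
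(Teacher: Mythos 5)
Your overall architecture coincides with the paper's: compare the two defining fibration sequences, induce a map on fibers, and use a five-lemma argument when $\fA$ is finite dimensional. But there is a genuine gap, and it sits exactly where you yourself flag it: the homotopy commutativity of the square intertwining $\Tr Sch$ with Karoubi's $Sch^{\top}$ is never established --- you only state that you \emph{expect} to reduce it to compatibility of the trace with excision and a comparison of the algebraic with the smooth Chern character. Since every other step in your plan is formal (functoriality of cofibers, five lemma), that square \emph{is} the mathematical content of the proposition; deferring it leaves the proof empty. There are also two smaller inaccuracies on the $K$-theory side: $\cL^1\cotimes\fA$ is only an algebraic tensor product, not a Banach algebra, so ``$K^{\top}(\cL^1\cotimes\fA)$'' is undefined as written --- one must complete to $\cL^1\hotimes\fA$ and then invoke Karoubi's density theorem, not stability alone, to get back to $\fA$; and there is no spectrum-level map $KH(\cL^1\cotimes\fA)\to KV^{\dif}(\fA)$ --- the comparison is the zigzag $KV(\cL^1\cotimes\fA)\to KV^{\dif}(\cL^1\hotimes\fA)\leftarrow KV^{\dif}(\fA)$, combined with the fact that $KV_n(\cL^1\cotimes\fA)\to KH_n(\cL^1\cotimes\fA)$ is an isomorphism for $n\ge 1$ (\cite{CT}, \cite{kh}), which identifies $\CT_n(\fA)$, $n\ge1$, with $\pi_n$ of the fiber of the $KV$-column.

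The idea your sketch is missing is the paper's device for obtaining the crucial commutativity without ever confronting the two differently defined characters head-on: interpolate a middle column, namely the same relative construction applied to the completed algebra $\cL^1\hotimes\fA$, and keep the relative complexes $HC(\cB\cotimes\fA:\cL^1\cotimes\fA)$ and $HC^{\top}(\cB\hotimes\fA:\cL^1\hotimes\fA)$ in the picture instead of passing to $HC(\fA/\C)$ and $HC^{\top}(\fA)$ right away. In the resulting diagram \eqref{diag:ctvsmk}, every horizontal map is induced either by the completion homomorphism $\cL^1\cotimes\fA\to\cL^1\hotimes\fA$ (together with the canonical transformation from the algebraic to the completed/smooth models of $KV$ and $HC$) or by the corner embedding $\fA\to\cL^1\hotimes\fA$, the latter being an equivalence on $KV^{\dif}$ and on relative $HC^{\top}$ by Karoubi density. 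Hence each square commutes by naturality of a single character construction, and the only identity requiring a computation is that the operator trace retracts the corner embedding, $\Tr\circ(p\otimes -)=\mathrm{id}$ on $HC^{\top}(\fA)$, which is immediate from $\Tr(p)=1$. Your route can certainly be completed, but completing it amounts to reconstructing precisely this intermediate diagram (equivalently, to reproving the algebraic-versus-smooth character compatibility of \cite{CT}); that should be stated and proved as the key lemma rather than assumed.
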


\begin{proof} Consider the simplicial ring 
\[
\Delta_\bu \fA:[n]\mapsto\Delta_n\fA=\fA[t_0,\dots,t_n]/\langle 1-\sum_{i=0}^nt_i\rangle.
\]
Let $KV(\fA)$ be the diagonal of the bisimplicial set $BGL(\Delta_\bu \fA)$. We have a homotopy commutative diagram
\begin{equation}\label{diag:ctvsmk}
\xymatrix{
KV(\cL^1\cotimes \fA)\ar[r]\ar[d]& KV^{\dif}(\cL^1\hotimes \fA)\ar[d]&KV^{\dif}(\fA)\ar[l]\ar[d]\\
HC(\cB\cotimes \fA :\cL^1\cotimes  \fA)[-2]\ar[r]\ar[d]^{\Tr}&HC^{\top}(\cB\hotimes \fA :\cL^1\hotimes\fA)[-2]\ar[d] & HC^{\top}(\fA)[-2]\ar[l]\ar@{=}[dl]\\
HC(\fA/\C)[-2]\ar[r]& HC^{\top}(\fA)[-2]}
\end{equation}
By \cite{CT}*{Lemma 3.2.1 and Theorem 6.5.3(ii)} and \cite{kh}*{Proposition 1.5} (or \cite{friendly}*{Proposition 5.2.3}), the natural map 
$$KV_n(\cL^1\cotimes \fA)\to KH_n(\cL^1\cotimes \fA)$$
is an isomorphism for $n\ge 1$. It follows from this that for $n\ge 1$, the group $\CT_n(\fA)$ is isomorphic to $\pi_n$ of the fiber of the composite of the first
column of diagram \eqref{diag:ctvsmk}. On the other hand, by Karoubi's density theorem, the map $KV^{\dif}(\fA)\to KV^{\dif}(\cL^1\hotimes \fA)$ is an equivalence;
inverting it and taking fibers and homotopy groups, we get a natural map $\CT_n(\fA)\to MK_n(\fA)$ $(n\ge 1)$. The commutativity of the diagram of the proposition
is clear. If now $\fA$ is finite dimensional, then $\fA\hotimes V=\fA\cotimes V$ for any locally convex vector space $V$. Hence the map $HC(\fA/\C)\to HC^{\top}(\fA)$ is
the identity map. Furthermore, by \cite{CT}*{Theorem 3.2.1}, the map $KV(\cL^1\cotimes \fA)\to KV^{\dif}(\cL^1\hotimes \fA)$ is an equivalence. It follows that
$\CT_n(\fA)\to MK_n(\fA)$ is an isomorphism for all $n\ge 1$, finishing the proof.
\end{proof}

\begin{ex}
We have
\[
 \CT_n(\C)=\left\{\begin{matrix} \C^* & n\ge 1, \text{ odd.}\\
                                \Z   & n\le 0, \text{ even.}\\
                                0    & \text{ otherwise.}
                 \end{matrix}
\right.
\]
\end{ex}

\subsection{Regulators}

In view of Proposition \ref{prop:mk=ct} above we may substitute $\tau$ for $\mu$ in the theorem below.

\begin{thm}\label{thm:kar}\cite{karast}*{Th\'eor\`eme 7.20}
Let $\cO$ be the ring of integers in a number field $F$. Write $F\otimes\R\cong\R^{r_1}\oplus \C^{r_2}$; put $r=r_1+r_2$. Then the inclusion $\cO\subset\C^r$ followed by the the map $\mk_n:K_n(\C)^r\to MK_n(\C)^r$ induces
a monomorphism $K_n(\cO)\otimes\Q\to MK_n(\C)^r\otimes\Q$ $(n\ge 1)$.
\end{thm}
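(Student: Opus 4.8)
The plan is to reduce the statement to Borel's theorem by identifying the displayed map with the Borel regulator. First I would dispose of the even case: Borel's computation of $K_*(\cO)\otimes\Q$ gives $K_n(\cO)\otimes\Q=0$ for even $n\ge 2$, so there the map is vacuously injective, and it suffices to treat odd $n=2m-1$. For such $n$ the target simplifies at once: by Proposition \ref{prop:mk=ct} together with the computation $MK_n(\C)=\CT_n(\C)=\C^*$ recorded above, we have $MK_n(\C)\otimes\Q\cong\R$. Hence after rationalization the multiplicative Chern character $\mk_n\colon K_n(\C)\otimes\Q\to MK_n(\C)\otimes\Q$ becomes a single homomorphism $K_{2m-1}(\C)\otimes\Q\to\R$, and the whole theorem reduces to understanding this map and its behaviour under the $r$ archimedean embeddings.

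The heart of the argument is to identify this rationalized character with the Borel regulator $b_{2m-1}$, up to a nonzero scalar. Karoubi's $\mk_n$ is assembled from the relative Chern character $ch^{\rel}\colon\cF(\C)\to HC^{\top}(\C)[-1]$ on the fiber $\cF(\C)=\hofi(BGL^+(\C)\to KV^{\dif}(\C))$ of \eqref{karfib}. I would compare this construction with the classical definition of $b_{2m-1}$, which transgresses the primitive generator of the continuous cohomology $H^{2m-1}_{\con}(GL(\C),\R)$ and evaluates it against $K_{2m-1}(\C)$. Concretely, one shows that $ch^{\rel}$ represents a nonzero rational multiple of that generator under the natural comparison between the (primitive) cyclic homology of $\C$ and the continuous cohomology of $GL(\C)$, so that on homotopy groups $\mk_n\otimes\Q$ and $b_{2m-1}$ coincide up to scaling. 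This is precisely the comparison effected in \cite{karast}.

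It remains to feed in Borel's theorem for the number field. The $r=r_1+r_2$ embeddings $\cO\subset\C^r$ induced by $F\otimes\R\cong\R^{r_1}\oplus\C^{r_2}$ turn the composite of the theorem into the Borel regulator assembled over the archimedean places. Borel proved (and for $n=1$ this is Dirichlet's unit theorem) that $K_{2m-1}(\cO)\otimes\Q$ has rank $r_1+r_2$ when $m$ is odd and rank $r_2$ when $m$ is even, and that the regulator maps it injectively into the corresponding coordinate subspace of $\R^r$. The parity dichotomy is dictated by complex conjugation, which acts on the regulator target by $(-1)^{m-1}$: the $r_1$ coordinates indexed by the real places survive exactly when $m$ is odd ($n\equiv 1\bmod 4$) and vanish when $m$ is even ($n\equiv 3\bmod 4$), whereas each complex place contributes one independent coordinate in both parities --- which is why a single embedding from each conjugate pair suffices. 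In either parity the assembled regulator is injective on $K_n(\cO)\otimes\Q$, which is the assertion of the theorem.

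The step I expect to be the main obstacle is the middle one: showing that $ch^{\rel}$ represents the Borel class itself, and not merely some class of the correct degree. This amounts to matching two a priori unrelated characteristic classes on $GL(\C)$ --- the cyclic-homology-valued relative Chern character on the one hand, and the van~Est/Borel class in continuous cohomology on the other --- while keeping track of the conjugation-equivariance and verifying that the comparison scalar is nonzero. Once that identification is secured, Borel's theorem yields the injectivity with no further work.
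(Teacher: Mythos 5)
The paper gives no proof of this statement: it is quoted verbatim from Karoubi \cite{karast}*{Th\'eor\`eme 7.20}, and the introduction records only that Karoubi proved it ``by comparing this map with the Borel regulator.'' Your sketch --- dispose of even $n\ge 2$ by Borel's vanishing, identify $\mk_n\otimes\Q$ with the Borel regulator up to a nonzero scalar, then invoke Borel's theorem with its parity-dependent ranks $r_1+r_2$ and $r_2$ --- is exactly that strategy, so it matches the cited proof in approach; the comparison step you single out as the main obstacle is indeed the entire content of Karoubi's argument.
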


\begin{rem}\label{rem:kareg}
It follows from classical results of Quillen that the map $K_n(\cO)\to K_n(F)$ is a rational isomorphism for $n\ge 2$. Thus $K_n(F)\to MK_n(\C)^r$ is rationally injective for $n\ge 2$. Moreover, $K_1(F)\to MK_1(\C)^r$ is injective too, since the map  $\tau_1:K_1(\C)\to MK_1(\C)$ is the identity of $\C^*$. Observe that the isomorphism $F\otimes\R\cong\R^{r_1}\oplus\C^{r_2}$ of Theorem \ref{thm:kar} is not canonical; it implies choosing $r_2$ nonreal embeddings $F\to \C$ out of the total $2r_2$, so that no two of them differ by complex conjugation. On the other hand the map 
\[
\iota:F\to \C^{\hom(F,\C)},\ \ \iota(x)_\sigma=\sigma(x) 
\] 
is canonical. Moreover, the composite 
\begin{equation}\label{map:buenreg}
\reg_n(F):K_n(F)\to K_n(\C)^{\hom(F,\C)}\to MK_n(\C)^{\hom(F,\C)}
\end{equation}
is still a rational monomorphism. Indeed the map of the theorem is obtained by composing \eqref{map:buenreg} with a projection $MK_n(\C)^{\hom(F,\C)}\to MK_n(\C)^r$.  
\end{rem}

\begin{lem}\label{lem:regcyc}
Let $F$ be a number field, $C$ a cyclic group of order $m$, $n$ a multiple of $m$ and $\zeta_n$ a primitive $n$-th root of $1$. Then the composite map
\begin{multline*}
K_*(F[C])\to K_*(F(\zeta_n)[C])\overset{\iota}\to\\
 K_*(\C[C])^{\hom(F(\zeta_n),\C)}\overset{\mu}\to MK_*(\C[C])^{\hom(F(\zeta_n),\C)}
\end{multline*}
is rationally injective. 
\end{lem}
\begin{proof}
Let $G_m=\Gal(F(\zeta_m)/F)$; if $M$ is a $G_m$-module, write $M^{G_m}$ for the fixed points. By \cite{luch}*{Lemma 8.4}, the map $F[C]\to F(\zeta_m)[C]$ induces an isomorphism $K_*(F[C])\otimes\Q\to K_*(F(\zeta_m)[C])^{G_m}\otimes\Q$. In particular, $K_*(F[C])\to K_*(F(\zeta_m)[C])$ is rationally injective. Now if $\sigma:F(\zeta_m)\to E$ is a field homomorphism, then $K_*(E[C])=K_*(E)^m$, and the map 
$K_*(F(\zeta_m)[C])\to K_*(E[C])$ decomposes into a direct sum of $m$ copies of the map $K_*(F(\zeta_m))\to K_*(E)$. In particular this applies when $E\in\{F(\zeta_n),\C\}$. 
In view of Theorem
\ref{thm:kar} and Remark \ref{rem:kareg}, it follows that both $K_*(F(\zeta_m)[C])\to MK_*(\C[C])^{\hom(F(\zeta_m),\C)}$ and $K_*(F(\zeta_n)[C])\to MK_*(\C[C])^{\hom(F(\zeta_n),\C)}$ are rationally injective. Summing up,
we have a commutative diagram
\[
\xymatrix{K_*(F[C])\ar[d]&\\
K_*(F(\zeta_m)[C])\ar[r]\ar[d]& MK_*(\C[C])^{\hom(F(\zeta_m),\C)}\ar[d]\\
K_*(F(\zeta_n)[C])\ar[r]& MK_*(\C[C])^{\hom(F(\zeta_n),\C)}
}
\]
We have shown that the first vertical map on the left and the two horizontal maps are rationally injective. Since the vertical map on the right is injective, we conclude that the composite of the left
column followed by the bottom horizontal arrow is a rational monomorphism, finishing the proof. 
\end{proof}

\section{Rational computation of equivariant \texorpdfstring{$\CT$}{CT}-homology}\label{sec:compuhgctc}
\numberwithin{equation}{section}

Let $G$ be a group and let $\org$ be its orbit category. For $G/H\in\org$, let $\cG(G/H)=\cG^G(G/H)$ be the transport groupoid.
It follows from \cite{cortar}*{\S3} that the diagram \eqref{diag:deftau} can be promoted to a commutative diagram of $\org$-spectra whose columns are homotopy fibrations 
\[
\xymatrix{
KH(\cL^1\cotimes  A[\cG(G/H)])[+1]\ar[d]^{Sch}\ar[dr]^{\Tr Sch}&\\
HC(\cB\cotimes A[\cG(G/H)]:\cL^1\cotimes  A[\cG(G/H)])[-1]\ar[r]\ar[d]&HC(A[\cG(G/H)]/\C)[-1]\ar[d]\\ 
K(\cB\cotimes A[\cG(G/H)] :\cL^1\cotimes  A[\cG(G/H)])\ar[r]  &\CT(A[\cG(G/H)])}
\]
If now $X$ is any $G$-simplicial set, then taking $G$-equivariant homology yields a diagram whose columns are again homotopy fibrations
\begin{equation}\label{diag:equi}
\xymatrix{
H^G(X,KH(\cL^1\cotimes  A))[+1]\ar@{=}[r]\ar[d]^{Sch}& H^G(X,KH(\cL^1\cotimes  A))[+1]\ar[d]\\
H^G(X,HC(\cB\cotimes A:\cL^1\cotimes  A))[-1]\ar[r]^-{\Tr}\ar[d]&H^G(X,HC(A/\C))[-1]\ar[d]\\ 
H^G(X, K(\cB\cotimes A:\cL^1\cotimes  A))\ar[r]  &H^G(X,\CT(A))}
\end{equation}

Hence 
\begin{equation}\label{hgcofi}
H^G(X,\CT(A))=\hocofi(H^G(X,KH(\cL^1\cotimes A))[+1]\to H^G(X,HC(A/\C))[-1]). 
\end{equation}
Similarly, a choice of rank one projection induces a map of $\org$-spectra 
\[
K(A[\cG(G/H)])\to K(\cB\cotimes A[\cG(G/H)]:\cL^1\cotimes A[\cG(G/H)]). 
\]
Taking equivariant homology we obtain a map
\[
H^G(X,K(A))\to H^G(X,K(\cB\cotimes A:\cL^1\cotimes A)).
\]
Composing this map with the bottom arrow in diagram \eqref{diag:equi} we obtain an equivariant character
\begin{equation}\label{map:equitau}
\tau:H^G(X,K(A))\to H^G(X,\CT(A)). 
\end{equation}
In what follows we shall be interested in several families of finite subgroups of a given group. We write $\fin$ and $\fcyc$ for the family of finite 
subgroups and the subfamily of those
finite subgroups that are cyclic. If $m\ge 1$ we write $\cyc_m$ for the family of those cyclic subgroups whose order divides $m$. 
If $G$ is a group and $\cF$ a family of subgroups, we write $\cE(G,\cF)$
for the corresponding classifying space. If $H\subset G$ is a subgroup in the family $\cF$, we write $(H)$ for the conjugacy class of $H$ and  
\[
(\cF)=\{(H):H\in\cF\}
\]
for the set of all conjugacy classes of subgroups of $G$ in the family $\cF$.
If $G$ is a group and $C\subset G$ is a cyclic subgroup, we write $N_GC$ for its normalizer, $Z_GC$ for its centralizer, and put
\[
W_GC=N_GC/Z_GC.
\]
If $C\subset G$ is a finite cyclic group, and $A(C)$ is its Burnside ring, then there is a canonical isomorphism $A(C)\otimes\Q\cong \Q^{\fin C}$. We write $\theta_C\in A(C)\otimes\Q$ for the element corresponding to the characteristic function $\chi_C\in \Q^{\fin C}$.

\begin{prop}\label{prop:compuhgctc}
Let $G$ be a group. Then the map $H_*^G(\cE(G,\fcyc),\CT(\C))\to H_*^G(\cE(G,\fin),\CT(\C))$ is an isomorphism and
\begin{multline}\label{hctcrat}
H_n^G(\cE(G,\fcyc),\CT(\C))\otimes\Q=\\
\bigoplus_{p+q=n}\bigoplus_{(C)\in(\fcyc)}H_{p}(Z_GC,\Q)\otimes_{\Q[W_GC]}\theta_C\cdot \CT_q(\C[C])\otimes\Q
\end{multline}
\end{prop}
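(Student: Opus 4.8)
The plan is to compute $H_*^G(\cE(G,\fcyc),\CT(\C))\otimes\Q$ by leveraging the defining homotopy cofiber sequence \eqref{hgcofi} together with the analogous, already-known rational computations for equivariant $KH$ and equivariant cyclic homology. Recall from \eqref{hgcofi} that $H^G(X,\CT(\C))$ is the homotopy cofiber of
\[
\Tr Sch\colon H^G(X,KH(\cL^1))[+1]\to H^G(X,HC(\C/\C))[-1],
\]
so that there is a long exact sequence relating the three equivariant theories. The strategy is therefore to apply the known decomposition formulas for the two outer terms (the equivariant $KH(\cL^1)$-homology and the equivariant $HC(\C/\C)$-homology over $\cE(G,\fcyc)$) termwise, and then to identify the rational cofiber with the asserted direct sum. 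First I would establish the comparison statement that $H_*^G(\cE(G,\fcyc),\CT(\C))\to H_*^G(\cE(G,\fin),\CT(\C))$ is an isomorphism: by the cofiber sequence and the five lemma it suffices to check this for $KH(\cL^1)$ and for $HC(\C/\C)$ separately, and for each of these the transition between $\fcyc$ and $\fin$ is known to be a rational (indeed integral, after the relevant identifications) isomorphism by the cited results (\cite{luch}, \cite{LR2}, \cite{lurela}).

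Next I would invoke the standard rational decomposition (of Lück-type, as in \cite{luch}, \cite{lurela}, \cite{LR2}, \cite{corel}, \cite{LR1}) for equivariant homology with coefficients in a functor over the family $\fcyc$. For a coefficient spectrum $\E$, that formula reads
\[
H_n^G(\cE(G,\fcyc),\E)\otimes\Q\cong\bigoplus_{p+q=n}\bigoplus_{(C)\in(\fcyc)}H_{p}(Z_GC,\Q)\otimes_{\Q[W_GC]}\theta_C\cdot \E_q(\C[C])\otimes\Q,
\]
where $\theta_C$ is the idempotent in $A(C)\otimes\Q$ cutting out the part supported on the full subgroup $C$. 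The plan is to apply this with $\E=KH(\cL^1)$ and with $\E=HC(\C/\C)$, whose $q$-th homotopy groups evaluated at $\C[C]$ are $KH_q(\cL^1\cotimes\C[C])$ and $HC_{q}(\C[C]/\C)$ respectively. Because the functor $C\mapsto H_p(Z_GC,\Q)\otimes_{\Q[W_GC]}\theta_C\cdot(-)$ is exact (tensoring with a projective $\Q[W_GC]$-module after applying the idempotent), it commutes with the long exact sequence of the cofiber $\Tr Sch$. Hence applying it to the coefficient-level fibration
\[
KH(\cL^1\cotimes\C[C])[+1]\overset{\Tr Sch}\longrightarrow HC(\C[C]/\C)[-1]\to \CT(\C[C])
\]
and passing to the associated long exact sequence of homotopy groups yields, in each fixed summand indexed by $(C)$, precisely $\theta_C\cdot\CT_q(\C[C])\otimes\Q$ as the $q$-th term. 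Summing over $p+q=n$ and over $(C)\in(\fcyc)$ produces the claimed formula \eqref{hctcrat}.

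The main obstacle I anticipate is justifying that the Lück-type decomposition formula genuinely applies to the coefficient functors $KH(\cL^1)$ and $HC(\C/\C)$ over the \emph{cyclic} family, and that it is natural enough to be compatible with the map $\Tr Sch$ at the level of the Mayer-Vietoris/spectral-sequence machinery that produces these decompositions. Concretely, one must verify that the splitting is induced by the Burnside-ring idempotents $\theta_C$ functorially in the coefficient spectrum, so that the character map $\Tr Sch$ respects the $(C)$-indexed summands and the five lemma / exactness argument can be run summand by summand. A secondary point is the rational exactness of $\theta_C\cdot(-)$: because $\theta_C$ is only a rational idempotent, all of this takes place after $-\otimes\Q$, which is where the hypothesis that we work rationally is essential. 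Once naturality of the decomposition in the coefficients is in hand, the identification of the cofiber is formal, and the $\fcyc$-versus-$\fin$ comparison follows from the corresponding known comparisons for the two outer theories.
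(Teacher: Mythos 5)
Your overall architecture --- compute the two outer terms of the defining cofiber sequence \eqref{hgcofi}, check compatibility with $\Tr Sch$, and read off the answer from the resulting long exact sequence, reducing the $\fcyc$-versus-$\fin$ statement to the same statement for the outer theories --- is exactly the paper's. The gap is that the step you yourself flag as ``the main obstacle'' is not a routine verification but the actual content of the proof, and the way you propose to get past it is not available: there is no ``standard rational decomposition'' of the displayed form valid for an arbitrary coefficient $\org$-spectrum $\E$. Splittings of that shape are theorems about specific theories --- they require a Mackey/induction structure on the coefficients, as in \cite{luch}, or a hands-on computation --- and none of the references you list proves such a formula for $KH(\cL^1)$ or for $HC(\C/\C)$ as such. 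In particular, writing the formula with $\E_q(\C[C])$ for $\E=KH(\cL^1)$ simply asserts what needs to be proved; likewise your claim that the $\fcyc\to\fin$ comparison for $KH(\cL^1)$ is ``known'' has no direct reference.

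What the paper actually does to fill these holes is the following. (a) For $KH(\cL^1)$ it uses the equivalence $KH(\cL^1[\cG(G/H)])\weq K^{\top}(C^*(\cG(G/H)))$ for finite $H$, i.e.\ \eqref{map:khkto}; this converts equivariant $KH(\cL^1)$-homology of $(G,\fin)$-complexes into equivariant topological $K$-homology, for which the decomposition is quoted from \cite{lurela}*{Theorem 0.7} and \cite{LR2}*{Theorem 8.4}, and the $\fcyc\to\fin$ equivalence from \cite{LR2}*{Proposition 2.13}. (b) For $HC(\C/\C)$ there is again no $\theta_C$-indexed theorem to quote; the paper first computes $H^G_*(\cE(G,\fcyc),HC(\C/\C))$ as a sum over conjugacy classes of elements of finite order, using \cite{LR1}*{Lemma 7.4} and the argument of \cite{cortar}*{Proposition 2.2.1}, and then regroups that sum, via the generators of each finite cyclic subgroup $C$, into the $\theta_C\cdot HC_q(\C[C]/\C)$ form. (c) The compatibility of $\Tr Sch$ with these two decompositions --- which you also leave open --- comes from the proof of Proposition \ref{prop:mk=ct}: under \eqref{map:khkto} and the identity $H^G_*(-,HC(\C/\C))=H^G_*(-,HC^{\top}(\C))$, the map $\Tr Sch$ identifies with $\Tr Sch^{\top}$, so by naturality it acts termwise as $\Tr Sch^{\top}_q:K^{\top}_{q+1}(\C[C])\to HC_{q-1}(\C[C]/\C)$. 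With (a)--(c) supplied, your formal steps (five lemma, and exactness of $\otimes_{\Q[W_GC]}$ after rationalization, which holds simply because $W_GC$ is finite so $\Q[W_GC]$ is semisimple) do complete the argument; without them the proposal is not a proof.
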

\begin{proof}
If $H$ is a finite subgroup, then the equivalence $KH(\cL^1)\weq K^{\top}(\cL^1)\overset{\sim}\leftarrow K^{\top}(\C)$
induces an equivalence $KH(\cL^1[\cG(G/H)])\overset{\sim}\to K^{\top}(C^*(\cG(G/H)))$. Hence if $X$ is a $(G,\fin)$-complex, we have an equivalence 
\begin{equation}\label{map:khkto}
H^G(X,KH(\cL^1))\weq H^G(X,K^{\top}(\C)).
\end{equation}
Thus $H^G(\cE(G,\fcyc),KH(\cL^1))\to H^G(\cE(G,\fin),KH(\cL^1))$ is an equivalence because $H^G(\cE(G,\fcyc),K^{\top}(\C))\to H^G(\cE(G,\fin),K^{\top}(\C))$ is (\cite{LR2}*{Proposition 2.13}). Similarly, $H^G(\cE(G,\fcyc),HC(\C/\C))\to H^G(\cE(G,\fin),HC(\C/\C))$ is an equivalence (see \cite{LR1}*{\S9} or \cite{corel}*{\S7}). From \eqref{hgcofi} and what we have just proved, it follows that
$H^G(\cE(G,\fcyc),\CT(\C))\to H^G(\cE(G,\fin),\CT(\C))$ is an equivalence. This shows the first assertion of the proposition. From \eqref{map:khkto}, 
\cite{lurela}*{Theorem 0.7} and 
\cite{LR2}*{Theorem 8.4}, we get
\begin{multline*}
H_n^G(\cE(G,\fcyc),KH(\cL^1))\otimes\Q
=\\
\bigoplus_{p+q=n}\bigoplus_{(C)\in(\fcyc)}H_p(Z_GC,\Q)\otimes_{\Q[W_GC]}\theta_C\cdot K^{\top}_q(\C[C])\otimes\Q.
\end{multline*}
Next write $\con_f(G)$ for the conjugacy classes of elements of $G$ of finite order, and $Gen(C)$ for the set of all generators of $C\in\fcyc$. By using \cite{LR1}*{Lemma 7.4} and the argument of the proof of \cite{cortar}*{Proposition 2.2.1} we obtain
\begin{gather*}
H^G_n(\cE(G,\fcyc),HC(\C/\C))=\\
=\bigoplus_{p+q=n}\bigoplus_{(g)\in\con_f(G)}H_p(Z_G(\langle g\rangle),\Q)\otimes HC_q(\C/\C)\\
                          =\bigoplus_{p+q=n}\bigoplus_{(C)\in (\fcyc)}H_p(Z_GC,\Q)\otimes_{\Q[W_G(C)]}\map(Gen(C),HC_q(\C/\C))\\
                          =\bigoplus_{p+q=n}\bigoplus_{(C)\in (\fcyc)}H_p(Z_GC,\Q)\otimes_{\Q[W_G(C)]}\theta_C\cdot HC_q(\C[C]/\C)
\end{gather*} 
It follows from the proof of Proposition \ref{prop:mk=ct} that under the isomorphism \eqref{map:khkto} and the identity $H^G_*(-,HC(\C/\C))=H_*^G(-,HC^{\top}(\C))$ the map ${\Tr}Sch$ identifies
with ${\Tr} Sch^{\top}$. Hence, by naturality, the map 
$$({\Tr} Sch)_n:H_{n+1}^G(\cE(G,\fcyc),KH(\cL^1))\to H_{n-1}^G(\cE(G,\fcyc),HC(\C/\C))$$ 
is induced by the maps ${\Tr}Sch^{\top}_q:K^{\top}_{q+1}(\C[C])\to HC^{\top}_{q-1}(\C[C])=HC_{q-1}(\C[C]/\C)$. The computation of $H_n^G(\cE(G,\fin),\CT(\C))\otimes\Q$ is now
immediate from this.
\end{proof}
\begin{rem}\label{rem:cycm}
We have an equivalence of $(G,\fcyc)$-spaces
\[
\colim_m \cE(G,\cyc_m)\iso \cE(G,\fcyc)
\]
where the colimit is taken with respect to the partial order of divisibility. Hence for every $\org$-spectrum $E$,  
\[
H_*^G(\cE(G,\fcyc),E)=\colim_mH_*^G(\cE(G,\cyc_m),E).
\]
Moreover it is clear from the proof of Proposition \ref{prop:compuhgctc} that for every $m$ the map
\[
H_*^G(\cE(G,\cyc_m),\CT(\C))\otimes\Q\to H_*^G(\cE(G,\fcyc),\CT(\C))\otimes\Q  
\]
is the inclusion 
\begin{multline}
\bigoplus_{p+q=n}\bigoplus_{(C)\in(\cyc_m)}H_{p}(Z_GC,\Q)\otimes_{\Q[W_GC]}\theta_C\cdot \CT_q(\C[C])\otimes\Q\hookrightarrow\\ \bigoplus_{p+q=n}\bigoplus_{(C)\in(\fcyc)}H_{p}(Z_GC,\Q)\otimes_{\Q[W_GC]}\theta_C\cdot \CT_q(\C[C])\otimes\Q.
\nonumber
\end{multline}

\end{rem}

\section{Conditions equivalent to the rational injectivity of the \texorpdfstring{$\CT$}{} assembly map}\label{sec:equi}

Let $G$ be a group. As shown in the proof of Proposition \eqref{prop:compuhgctc}, we have a direct sum decomposition
\begin{multline*}
H^G_n(\cE(G,\fcyc),HC(\C/\C))=\\
\bigoplus_{p+q=n}\bigoplus_{(C)\in (\fcyc)}H_p(Z_GC,\Q)\otimes_{\Q[W_G(C)]}\theta_C\cdot HC_q(\C[C]/\C).
\end{multline*}
By the same proof, for each $p,q$ we have
\begin{multline}\label{formu:equihc}
\bigoplus_{(C)\in (\fcyc)}H_p(Z_GC,\Q)\otimes_{\Q[W_G(C)]}\theta_C\cdot HC_q(\C[C]/\C)=\\
\bigoplus_{(g)\in\con_f(G)}H_p(Z_G\langle g\rangle,\Q)\otimes HC_q(\C/\C).
\end{multline}
On the other hand we also have a decomposition
\[
HC_n(\C[G]/\C)=\bigoplus_{(g)\in\con(G)}HC_n^{(g)}(\C[G]/\C).
\]
The assembly map identifies
\[
H_n^G(\cE(G,\fcyc),HC(\C/\C))=\bigoplus_{(g)\in\con_f(G)}HC_n^{(g)}(\C[G]/\C).
\] 
Thus there is a projection 
\[
\pi_n^{\fcyc}:HC_n(\C[G]/\C)\to H_n^G(\cE(G,\fcyc),HC(\C/\C))
\]
which is left inverse to the asembly map. By composing $\Tr Sch:KH_{n+1}(\cL^1[G])\to HC_{n-1}(\C[G]/\C)$ with the projection above, we obtain a map
\begin{equation}\label{map:cg}
\pi_{n-1}^{\fcyc}\Tr Sch:KH_{n+1}(\cL^1[G])\otimes\Q\to H_{n-1}^G(\cE(G,\fcyc),HC(\C/\C)).
\end{equation}
Next, if $m\ge 1$ then
\[
H_n^G(\cE(G,\cyc_m),HC(\C/\C))=\bigoplus_{(g)\in\con_f(G), g^m=1}HC_n^{(g)}(\C[G]/\C).
\]
Thus we also have a map
\begin{equation}\label{map:cgm}
\pi_{n-1}^{\cyc_m}\Tr Sch:KH_{n+1}(\cL^1[G])\otimes\Q\to H_{n-1}^G(\cE(G,\cyc_m),HC(\C/\C)).
\end{equation}

In the following proposition we use the following notation. We write

\begin{multline}\label{equiplu}
H_n^G(\cE(G,\cyc_m),\CT(\C)\otimes\Q)^+:=\\
\bigoplus_{p+q=n, q\ge 1}\bigoplus_{(C)\in(\cyc_m)}H_{p}(Z_GC,\Q)\otimes_{\Q[W_GC]}\theta_C\cdot \CT_q(\C[C])\otimes\Q
\end{multline}

and 

\begin{multline}\label{equimin}
H_n^G(\cE(G,\cyc_m),\CT(\C)\otimes\Q)^-:=\\
\bigoplus_{p+q=n, q\le 0}\bigoplus_{(C)\in(\cyc_m)}H_{p}(Z_GC,\Q)\otimes_{\Q[W_GC]}\theta_C\cdot \CT_q(\C[C])\otimes\Q.
\end{multline}

Note that, by Proposition \ref{prop:compuhgctc}, 
$H_n^G(\cE(G,\cyc_m),\CT(\C))\otimes\Q$ is the direct sum of \eqref{equiplu} and \eqref{equimin}.
 
\begin{prop}\label{prop:equiva}
Let $G$ be a group, $n\in\Z$ and $m\ge 1$. The following are equivalent.
\item[i)] The rational assembly map
\begin{equation}\label{map:assCT}
H_n^G(\cE(G,\cyc_m),\CT(\C))\otimes\Q\to \CT_n(\C[G])\otimes\Q
\end{equation}
is injective.
\item[ii)] The restriction of the rational assembly map to the summand \eqref{equiplu} is inyective.

\item [iii)] The image of the map \eqref{map:cgm} coincides with the image of 
\[
\Tr Sch:H^G_{n+1}(\cE(G,\cyc_m),KH(\cL^1))\otimes\Q\to H_{n-1}^G(\cE(G,\cyc_m),HC(\C/\C))
\]
\end{prop}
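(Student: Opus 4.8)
The plan is to work rationally throughout and compare, for $X=\cE(G,\cyc_m)$, the long exact sequence produced by \eqref{hgcofi},
\[
\cdots\to H_{n+1}^G(X,KH(\cL^1))\overset{\Tr Sch}\to H_{n-1}^G(X,HC(\C/\C))\overset{\iota}\to H_n^G(X,\CT(\C))\overset{\rho}\to H_n^G(X,KH(\cL^1))\to\cdots
\]
with the analogous sequence for $\C[G]$ (the same maps, decorated with primes), the two being joined vertically by the assembly maps $\alpha$ on $KH(\cL^1)$, $\beta$ on $HC(\C/\C)$, and $\gamma$ on $\CT(\C)$, the last being \eqref{map:assCT}. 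Two facts are available. First, $\alpha$ is injective: by Remark \ref{rem:cycm} and \eqref{map:khkto} it is the inclusion of a summand $H_*^G(\cE(G,\cyc_m),KH(\cL^1))\otimes\Q\hookrightarrow H_*^G(\cE(G,\fin),KH(\cL^1))\otimes\Q$ followed by the $\fin$-assembly map, which is injective by Yu's theorem. Second, $\beta$ is a split monomorphism, $\pi_{n-1}^{\cyc_m}$ being a left inverse, as recalled before the proposition.

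I would first prove i)$\Leftrightarrow$ii). By naturality $\rho'\gamma=\alpha\rho$, so $\gamma(x)=0$ gives $\alpha\rho(x)=0$, hence $\rho(x)=0$ since $\alpha$ is injective, i.e.\ $x\in\ker\rho=\mathrm{im}\,\iota$. Now $\iota$ is induced by the local connecting maps $HC_{q-1}(\C[C])\to\CT_q(\C[C])$ (cf.\ the proof of Proposition \ref{prop:compuhgctc}), and since cyclic homology is connective their source is nonzero only for $q-1\ge0$; thus $\mathrm{im}\,\iota$ is contained in the summand \eqref{equiplu}. Consequently $\ker\gamma\subset\eqref{equiplu}$, so $\gamma$ is injective if and only if its restriction to \eqref{equiplu} is, which is exactly i)$\Leftrightarrow$ii).

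The equivalence with iii) comes from a diagram chase in the same ladder. If $\gamma(x)=0$ then, as above, $x=\iota(z)$ for some $z$, and $\iota'\beta(z)=\gamma\iota(z)=0$ puts $\beta(z)$ in $\ker\iota'=\mathrm{im}(\Tr Sch)$ for $\C[G]$. Hence $\ker\gamma=\iota(S)$ with $S=\{z:\beta(z)\in\mathrm{im}(\Tr Sch)\}$, and $\gamma$ is injective precisely when $S\subset\ker\iota=\mathrm{im}(\Tr Sch)$, the equivariant one. Applying $\pi_{n-1}^{\cyc_m}$ and using $\pi_{n-1}^{\cyc_m}\beta=\mathrm{id}$ gives $S\subset\mathrm{im}(\pi_{n-1}^{\cyc_m}\Tr Sch)$, i.e.\ into the image of \eqref{map:cgm}; conversely $\mathrm{im}(\Tr Sch)\subset S$ by the naturality $\beta\,\Tr Sch=\Tr Sch\,\alpha$. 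We therefore always have
\[
\mathrm{im}(\Tr Sch)\subset S\subset\mathrm{im}(\pi_{n-1}^{\cyc_m}\Tr Sch),
\]
so condition iii) (equality of the two ends) forces $S=\mathrm{im}(\Tr Sch)$, which is i).

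The remaining implication i)$\Rightarrow$iii) is the non-formal part and the step I expect to be the main obstacle: it requires the upper equality $S=\mathrm{im}(\pi_{n-1}^{\cyc_m}\Tr Sch)$, equivalently that $\mathrm{im}(\Tr Sch)\subset HC_{n-1}(\C[G]/\C)$ be stable under the idempotent $\beta\,\pi_{n-1}^{\cyc_m}$. I would deduce this from the compatibility of $\Tr Sch$ with the conjugacy-class decomposition $HC_*(\C[G]/\C)=\bigoplus_{(g)\in\con(G)}HC_*^{(g)}$: the trace \eqref{map:taucyc} leaves the $\C[G]$-factors untouched and hence preserves the conjugacy grading, while $Sch$ is assembled from $ch$, the periodicity $S$ and the excision equivalence, all natural for the $\C[G]$-structure carrying that grading. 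Granting this, $\mathrm{im}(\Tr Sch)=\bigoplus_{(g)}\big(\mathrm{im}(\Tr Sch)\cap HC_*^{(g)}\big)$ is fixed by the projection $\beta\,\pi_{n-1}^{\cyc_m}$ onto the summands indexed by classes of finite order dividing $m$, yielding $S=\mathrm{im}(\pi_{n-1}^{\cyc_m}\Tr Sch)$ and completing i)$\Rightarrow$iii); together with i)$\Leftrightarrow$ii) this gives all three equivalences.
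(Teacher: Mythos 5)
Your overall strategy coincides with the paper's: the same ladder of long exact sequences coming from \eqref{hgcofi} and from the defining cofibration of $\CT(\C[G])$, injectivity of the $KH(\cL^1)$-assembly map $\alpha$ via Yu's theorem, and the splitting $\pi_{n-1}^{\cyc_m}$ of $\beta$. Your proofs of i)\,$\Leftrightarrow$\,ii) and of iii)\,$\Rightarrow$\,i) are correct and essentially the paper's own (for i)\,$\Leftrightarrow$\,ii) the paper writes $x=x_++x_-$ and uses that $\rho$ is injective on \eqref{equimin}, which by exactness is the same argument as your inclusion $\mathrm{im}\,\iota\subseteq{}$\eqref{equiplu}; for iii)\,$\Rightarrow$\,i) the paper settles for a brief chase that matches your chain $\mathrm{im}(\Tr Sch)\subseteq S\subseteq\mathrm{im}(\pi_{n-1}^{\cyc_m}\Tr Sch)$). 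You have also located the crux of i)\,$\Rightarrow$\,iii) correctly: it is equivalent to the claim that $\mathrm{im}(\Tr Sch)\subseteq HC_{n-1}(\C[G]/\C)$ is stable under the idempotent $e_m:=\beta\circ\pi_{n-1}^{\cyc_m}$. Unwinding the paper's one-sentence argument for this implication (``the image of $y$ under the vertical map is a nonzero element of the kernel of the next horizontal map'') shows that it tacitly assumes exactly this stability, since $\gamma\iota(y)=\iota'\beta(y)=\iota'\bigl(e_m\Tr Sch(w)\bigr)$, so you have isolated precisely the step on which everything turns.

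The gap is in your proposed proof of that stability. From the compatibility of $\Tr$, the periodicity operator $S$, and the excision equivalence with the conjugacy-class decomposition you infer that $\mathrm{im}(\Tr Sch)$ is a homogeneous subspace, $\mathrm{im}(\Tr Sch)=\bigoplus_{(g)}\bigl(\mathrm{im}(\Tr Sch)\cap HC_{n-1}^{(g)}\bigr)$. This inference is invalid. The conjugacy decomposition is a decomposition of cyclic modules; it is not induced by any decomposition of the algebra $\cL^1\cotimes\C[G]$, so it does not act on $KH_*(\cL^1\cotimes\C[G])$, and there is no square intertwining $e_m$ with any operator on the source of $\Tr Sch$. ``Naturality for the grading'' is therefore vacuous at the very first step $ch\colon KH_{n+1}(\cL^1\cotimes\C[G])\to HP_{n+1}(\cL^1\cotimes\C[G])$, and the image of a linear map into a graded vector space need not be homogeneous: the diagonal in $\Q\oplus\Q$ already defeats the principle you invoke. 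Nor is this a removable technicality: homogeneity of $\mathrm{im}(\Tr Sch)$ for all $m$ is an unconditional statement about arbitrary classes in $KH_{n+1}(\cL^1\cotimes\C[G])$ of trace-conjecture type, and it is exactly what the hypothesis of Corollary \ref{coro:fjct} exists to supply --- when the $KH(\cL^1)$-assembly map is surjective, every class comes from $H^G_{n+1}(\cE(G,\fcyc),KH(\cL^1))\otimes\Q$, where $\Tr Sch$ \emph{is} compatible with the decomposition of Proposition \ref{prop:compuhgctc}, and then only iii)\,$\Rightarrow$\,i), which you did prove, is needed. So your proposal establishes i)\,$\Leftrightarrow$\,ii) and iii)\,$\Rightarrow$\,i), which is all that the paper's applications use, but its justification of i)\,$\Rightarrow$\,iii) does not close the argument; to be fair, at this same point the paper's own proof consists of a bare assertion.
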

\begin{proof}
It is clear that i) implies ii). Assume that ii) holds and consider the following commutative diagram with exact columns:
\[
\xymatrix{
H^G_{n+1}(\cE(G,\cyc_m), KH(\cL^1))\otimes\Q\ar[d]^{{\Tr} Sch}\ar[r]& KH_{n+1}(\cL^1[G])\otimes\Q\ar[d]\ar[dl]^{\eqref{map:cgm}}
\\
H^G_{n-1}(\cE(G,\cyc_m),HC(\C/\C))\ar[r]\ar[d]&HC_{n-1}(\C[G])\ar[d]\\
H^G_{n}(\cE(G,\cyc_m),\CT(\C))\otimes\Q\ar[d]\ar[r]&\CT_n(\C[G])\otimes\Q\ar[d]\\
H^G_{n}(\cE(G,\cyc_m),KH(\cL^1))\otimes\Q\ar[r]& KH_n(\cL^1[G])\otimes\Q.\\
}
\]
Let $x$ be an element of the
kernel of the map of part i), that is of the first map above bottom in the diagram above. Write $x=x_++x_-$, with $x_+$ in \eqref{equiplu} and $x_-$ in \eqref{equimin}. The image of $x$ under the vertical map must be zero, since by Yu's theorem (\cite{yu}, see also \cite{cortar}), the bottom horizontal map is injective.  By \eqref{ctlow} and the proof of Proposition \ref{prop:compuhgctc}, this implies that $x_-=0$, proving that ii) implies i). Next assume $y$ is an element in the image of \eqref{map:cgm}
which is not in the image of the vertical map $\Tr Sch$ in the diagram above. Then the image of $y$ under the vertical map is a nonzero element of the kernel
of the next horizontal map. Thus i) implies iii). The converse is also clear, using Yu's theorem again. 
\end{proof}

\begin{coro}\label{coro:fjct}
Let $G$ be a group and let $n,p\in\Z$ with $n\equiv p+1\mod 2$. Assume that the map
\begin{equation}\label{map:fjct}
H^G_p(\cE(G,\fcyc),KH(\cL^1))\otimes\Q\to KH_p(\cL^1[G])\otimes\Q
\end{equation}
is surjective. Then the map \eqref{map:assCT} is injective for every $m\ge 1$.
\end{coro}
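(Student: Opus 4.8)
The plan is to reduce everything to Proposition~\ref{prop:equiva}. By that proposition, \eqref{map:assCT} is injective precisely when its condition iii) holds, i.e. when the image of \eqref{map:cgm} equals the image of $\Tr Sch$ on $H^G_{n+1}(\cE(G,\cyc_m),KH(\cL^1))\otimes\Q$. The commutative square relating the $\cyc_m$--assembly map to $\pi^{\cyc_m}_{n-1}$ always yields the inclusion of the latter image in the former, which is exactly \eqref{intro:inc}; so the task is to produce the reverse inclusion, and this is where the surjectivity hypothesis must be used.

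First I would convert the degree--$p$ hypothesis into the degree I actually need. Since $n\equiv p+1\bmod 2$, we have $n+1\equiv p\bmod 2$. In the proof of Proposition~\ref{prop:compuhgctc} the equivalence $KH(\cL^1)\weq K^{\top}(\C)$ identifies, over the orbit category of $\fcyc$, the coefficient $\org$--spectrum with $K^{\top}(C^*(\cG(G/-)))$, which is $2$--periodic by Bott periodicity. As this periodicity is a natural self--equivalence $\Sigma^2$ of the coefficient system, it induces $2$--periodicity isomorphisms on both $H^G_*(\cE(G,\fcyc),KH(\cL^1))$ and $KH_*(\cL^1[G])$ that commute with the assembly map. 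Hence surjectivity of \eqref{map:fjct} in degree $p$ propagates to surjectivity of
\[
\alpha'\colon H^G_{n+1}(\cE(G,\fcyc),KH(\cL^1))\otimes\Q\to KH_{n+1}(\cL^1[G])\otimes\Q.
\]

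Next I would establish the image equality for the large family $\fcyc$ and then project it down to $\cyc_m$. The commutativity $\pi^{\fcyc}_{n-1}\Tr Sch\circ\alpha'=\Tr Sch$, combined with the surjectivity of $\alpha'$ just obtained, gives
\[
\pi^{\fcyc}_{n-1}\Tr Sch\bigl(KH_{n+1}(\cL^1[G])\otimes\Q\bigr)=\Tr Sch\bigl(H^G_{n+1}(\cE(G,\fcyc),KH(\cL^1))\otimes\Q\bigr).
\]
By \eqref{formu:equihc} and Remark~\ref{rem:cycm} the projection $\pi^{\cyc_m}_{n-1}$ factors as $\proj\circ\pi^{\fcyc}_{n-1}$, where $\proj$ is the projection of $H^G_{n-1}(\cE(G,\fcyc),HC(\C/\C))$ onto its summand $H^G_{n-1}(\cE(G,\cyc_m),HC(\C/\C))$, indexed by those $(C)\in(\fcyc)$ lying in $(\cyc_m)$. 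Since, as recorded in the proof of Proposition~\ref{prop:compuhgctc}, $\Tr Sch$ is diagonal for the decomposition over $(\fcyc)$ (induced on each summand by $\Tr Sch^{\top}_q\colon K^{\top}_{q+1}(\C[C])\to HC_{q-1}(\C[C]/\C)$), the map $\proj$ carries $\Tr Sch(H^G_{n+1}(\cE(G,\fcyc),KH(\cL^1)))$ onto $\Tr Sch(H^G_{n+1}(\cE(G,\cyc_m),KH(\cL^1)))$. Applying $\proj$ to the displayed equality therefore yields
\[
\pi^{\cyc_m}_{n-1}\Tr Sch\bigl(KH_{n+1}(\cL^1[G])\otimes\Q\bigr)=\Tr Sch\bigl(H^G_{n+1}(\cE(G,\cyc_m),KH(\cL^1))\otimes\Q\bigr),
\]
which is condition iii) of Proposition~\ref{prop:equiva}. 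The corollary follows for every $m\ge1$.

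The step I expect to demand the most care is the second one: verifying that Bott periodicity is genuinely a natural self--equivalence of the $\org$--spectrum $KH(\cL^1[\cG(G/-)])$ restricted to $\fcyc$, so that the induced $2$--periodicity isomorphisms are compatible with assembly and transport surjectivity from degree $p$ to degree $n+1$. Once that is in place, the remainder is bookkeeping with the direct--sum decompositions of Proposition~\ref{prop:compuhgctc} and the factorization $\pi^{\cyc_m}_{n-1}=\proj\circ\pi^{\fcyc}_{n-1}$.
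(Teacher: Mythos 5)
Your proof follows the same route as the paper's: reduce to condition iii) of Proposition \ref{prop:equiva}, move the surjectivity hypothesis from degree $p$ to degree $n+1$ by $2$-periodicity, then descend from $\fcyc$ to $\cyc_m$. Your last step --- the factorization $\pi^{\cyc_m}_{n-1}=\proj\circ\pi^{\fcyc}_{n-1}$ together with the observation that $\Tr Sch$ is diagonal for the decomposition over $(\fcyc)$, so that $\proj$ carries the image of $\Tr Sch$ over $\fcyc$ onto its image over $\cyc_m$ --- is correct, and is exactly the bookkeeping that the paper compresses into the sentence ``it follows that condition iii) holds.''

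The gap is in your periodicity step, the one you yourself flag as delicate. The equivalence $KH(\cL^1[\cG(G/H)])\weq K^{\top}(C^*(\cG(G/H)))$ from the proof of Proposition \ref{prop:compuhgctc} is available only when $H$ is \emph{finite}. It therefore yields a $2$-periodic self-equivalence of the coefficient system restricted to $\fcyc$, hence $2$-periodicity of $H^G_*(\cE(G,\fcyc),KH(\cL^1))$, the \emph{domain} of assembly; but it says nothing about the value of the $\org$-spectrum at the orbit $G/G$, namely $KH(\cL^1[G])$ with $G$ infinite, which is the \emph{target}. So your claim that this self-equivalence ``induces $2$-periodicity isomorphisms on both $H^G_*(\cE(G,\fcyc),KH(\cL^1))$ and $KH_*(\cL^1[G])$ that commute with the assembly map'' is unjustified, and without periodicity of the target the hypothesis cannot be transported from degree $p$ to degree $n+1$: periodicity of the domain alone gives no relation between $KH_{n+1}(\cL^1[G])$ and $KH_p(\cL^1[G])$. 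The repair is what the paper actually cites at this point: by \cite{CT}*{Theorem 6.5.3}, $KH_*(\cL^1\cotimes A)$ is $2$-periodic \emph{naturally in the $\C$-algebra $A$}; applying this with $A=\C[\cG(G/H)]$ for every subgroup $H\le G$, including $H=G$ itself, gives a periodicity self-equivalence of the whole $\org$-spectrum, hence periodicity isomorphisms on both sides of the assembly map commuting with it. With that substitution your argument is complete and coincides with the paper's.
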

\begin{proof}
By Yu's theorem (\cite{yu},\cite{cortar}) the map \eqref{map:fjct} is always injective; under our current assumptions, it is an isomorphism. Moreover, by \cite{CT}*{Theorem 6.5.3}, the groups $KH_p(\cL^1[G])$ depend only on the parity of $p$. It follows that condition iii) of Proposition \ref{prop:equiva} holds for every $m$ and every $n\equiv p+1\mod 2$. This concludes the proof. 
\end{proof}

\section{Rational injectivity of the equivariant regulators}\label{sec:regeq}

Let $G$ be a group. By composing the equivariant character \eqref{map:equitau} with the map induced by the inclusion $\Z\subset \C$ we obtain a map  
\begin{equation}\label{map:equiregZ}
H_*^G( \cE(G,\{1\}),K(\Z))\to H_*^G(\cE(G,\fin),K(\C))\overset{\tau}\to H_*^G(\cE(G,\fin),\CT(\C)).
\end{equation}

\begin{prop}\label{prop:monoequiregZ}
The map \eqref{map:equiregZ} is rationally injective.
\end{prop}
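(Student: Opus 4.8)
The plan is to reduce the equivariant statement to the known nonequivariant regulator injectivity (Theorem~\ref{thm:kar}) by means of the rational computation of $\CT$-homology in Proposition~\ref{prop:compuhgctc}. First I would pass from the trivial family $\{1\}$ on the source side to a more convenient indexing. Since $\cE(G,\{1\})=EG$, the equivariant homology $H_*^G(EG,K(\Z))$ is the ordinary homology of $BG$ with coefficients in $K(\Z)$, so there is an Atiyah--Hirzebruch type decomposition whose rationalization is $\bigoplus_{p+q=n}H_p(G,\Q)\otimes K_q(\Z)\otimes\Q$. On the target side, Proposition~\ref{prop:compuhgctc} gives
\[
H_n^G(\cE(G,\fin),\CT(\C))\otimes\Q=\bigoplus_{p+q=n}\bigoplus_{(C)\in(\fcyc)}H_p(Z_GC,\Q)\otimes_{\Q[W_GC]}\theta_C\cdot\CT_q(\C[C])\otimes\Q.
\]
The key point is that the map \eqref{map:equiregZ} respects these decompositions: it is induced levelwise, so it sends the $H_p$-part to the $H_p$-parts indexed by cyclic subgroups, and on the trivial-subgroup summand $C=\{1\}$ it is essentially $H_p(G,\Q)\otimes(\tau\reg)_q$.

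Next I would isolate the trivial-subgroup summand $(C)=(\{1\})$ on the target. For this summand $Z_G\{1\}=G$, $W_G\{1\}=1$, and $\theta_{\{1\}}\cdot\CT_q(\C)=\CT_q(\C)$, so the contribution is $\bigoplus_{p+q=n}H_p(G,\Q)\otimes\CT_q(\C)\otimes\Q$. The composite \eqref{map:equiregZ} restricted to this summand is $\bigoplus_p H_p(G,\Q)\otimes\bigl(K_q(\Z)\otimes\Q\xrightarrow{\tau_q}\CT_q(\C)\otimes\Q\bigr)$, where the second factor is exactly the nonequivariant regulator \eqref{intro:zreg}. By Proposition~\ref{prop:mk=ct} we may identify $\tau_q$ with $\mu_q$ and $\CT_q(\C)$ with $MK_q(\C)$ for $q\ge1$, and then Theorem~\ref{thm:kar} (for $F=\Q$, where $\cO=\Z$ and $r=1$) tells us $K_q(\Z)\to MK_q(\C)$ is rationally injective for $q\ge1$. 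Tensoring a rational injection with the flat $\Q$-vector space $H_p(G,\Q)$ preserves injectivity, so the restriction of \eqref{map:equiregZ} to the trivial-subgroup summand is rationally injective.

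It then remains to check that the full map is injective, not merely its restriction to one summand. The cleanest way is to observe that on the source side $\cE(G,\{1\})$ sees only the trivial subgroup, so the image of $H_*^G(EG,K(\Z))$ lands entirely in the $(C)=(\{1\})$ summand of the target decomposition, where a projection onto that summand recovers the regulator computation above; composing \eqref{map:equiregZ} with this projection is injective, hence so is \eqref{map:equiregZ} itself. Concretely, the map factors as $H_*^G(EG,K(\Z))\to H_*^G(EG,K(\C))\to H_*^G(EG,\CT(\C))\to H_*^G(\cE(G,\fin),\CT(\C))$, and the induced map $EG=\cE(G,\{1\})\to\cE(G,\fin)$ together with the inclusion of summands lets us detect the image via the trivial-subgroup projection. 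The main obstacle I anticipate is the bookkeeping needed to verify that the equivariant character $\tau$ and the change-of-family map really do act summandwise compatibly with the identifications in Proposition~\ref{prop:compuhgctc}, i.e.\ that on the trivial-subgroup summand the equivariant map is literally $H_p(G,\Q)\otimes(\tau\reg)_q$ with no cross-terms into the higher cyclic summands; once that naturality is pinned down, injectivity is a formal consequence of Theorem~\ref{thm:kar} and flatness of $H_p(G,\Q)$ over $\Q$.
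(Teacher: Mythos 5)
Your proposal is correct and is essentially the paper's own proof: both rationally decompose the source as $\bigoplus_{p+q=n}H_p(G,\Q)\otimes K_q(\Z)\otimes\Q$, observe that \eqref{map:equiregZ} factors through the trivial-subgroup summand $\bigoplus_{p+q=n}H_p(G,\Q)\otimes\CT_q(\C)\otimes\Q$ of the decomposition \eqref{hctcrat} from Proposition~\ref{prop:compuhgctc}, and conclude from the rational injectivity of $K_q(\Z)\to\CT_q(\C)=MK_q(\C)$ given by Theorem~\ref{thm:kar} (via Proposition~\ref{prop:mk=ct}). The naturality bookkeeping you flag at the end is precisely what the paper also leaves implicit, so there is nothing missing relative to the published argument.
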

\begin{proof}
We have
\begin{equation}\label{equikz}
H_n^G(\cE(G,\{1\}),K(\Z))\otimes\Q=\bigoplus_{p+q=n}H_p(G,\Q)\otimes K_q(\Z)\otimes\Q.
\end{equation}
By Theorem \ref{thm:kar}, the regulators $K_q(\Z)\to K_q(\C)\to \CT_q(\C)=MK_q(\C)$ induce a monomorphism from \eqref{equikz} to
\begin{equation}\label{hegct}
\bigoplus_{p+q=n}H_p(G,\Q)\otimes \CT_q(\C)\otimes\Q.
\end{equation}
The map \eqref{map:equiregZ} tensored with $\Q$ is the composite of the above monomorphism with the inclusion of \eqref{hegct} as a direct summand in \eqref{hctcrat}.
\end{proof}
 
Let $F$ be a number field, $G$ a group and $m\ge 1$. Let $\zeta_m$ be a primitive $m^{th}$ root
of $1$. The map $\cE(G,\cyc_m)\to \cE(G,\fcyc)$, together with the inclusion 
\[
F\subset F(\zeta_m)\overset\iota\to \C^{\hom(F(\zeta_m),\C)} 
\]
and the character $\tau:K(\C)\to \CT(\C)$, induce a homomorphism
\begin{equation}\label{map:equiregFm}
H^G_*(\cE(G,\cyc_m), K(F))\to H_*^G(\cE(G,\fcyc),\CT(\C))^{\hom(F(\zeta_m),\C)}. 
\end{equation}

\begin{prop}\label{prop:monoequiregF}
The map \eqref{map:equiregFm} is rationally injective.
\end{prop}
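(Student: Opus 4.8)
The plan is to reduce Proposition~\ref{prop:monoequiregF} to the rational injectivity established in Lemma~\ref{lem:regcyc}, by decomposing both sides of the map \eqref{map:equiregFm} into the direct sums indexed by conjugacy classes of cyclic subgroups that are produced by Proposition~\ref{prop:compuhgctc} and by the corresponding computation of equivariant $K$-homology. First I would write out the source of \eqref{map:equiregFm}. Since $K(F)$-equivariant homology for the family $\cyc_m$ admits a Chern-character computation of exactly the same shape as \eqref{hctcrat} (this is the formula of \cite{lurela}*{Theorem 0.7} together with \cite{LR2}*{Theorem 8.4}, used already in the proof of Proposition~\ref{prop:compuhgctc}), we have
\[
H_n^G(\cE(G,\cyc_m),K(F))\otimes\Q=\bigoplus_{p+q=n}\bigoplus_{(C)\in(\cyc_m)}H_p(Z_GC,\Q)\otimes_{\Q[W_GC]}\theta_C\cdot K_q(F[C])\otimes\Q.
\]
By Remark~\ref{rem:cycm} the target of \eqref{map:equiregFm} restricted to the image of $\cE(G,\cyc_m)$ is the corresponding sum over $(C)\in(\cyc_m)$ with $K_q(F[C])$ replaced by $\CT_q(\C[C])^{\hom(F(\zeta_m),\C)}$. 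The essential point is that \eqref{map:equiregFm} is, summand by summand over each $(C)$, induced by tensoring the identity on the group-homology factor $H_p(Z_GC,\Q)\otimes_{\Q[W_GC]}\theta_C\cdot(-)$ with the coefficient map $K_q(F[C])\otimes\Q\to \CT_q(\C[C])^{\hom(F(\zeta_m),\C)}\otimes\Q$ on $q$-th homotopy groups.

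The key step is then to identify that coefficient map and to invoke the appropriate injectivity. For a fixed cyclic $C\in\cyc_m$ of order dividing $m$, the composite that appears on the $(C)$-summand is precisely the map obtained by applying $\mu=\tau$ to $K_q(F[C])\to K_q(\C[C])^{\hom(F(\zeta_m),\C)}$. Here one must be slightly careful: the subgroup $C$ has order dividing $m$, so $m$ is a multiple of $|C|$, and a primitive $m$-th root of unity serves as the primitive $n$-th root demanded by Lemma~\ref{lem:regcyc} with the roles of the integers matched by taking the modulus $m$ in place of the lemma's $n$. Thus Lemma~\ref{lem:regcyc} applies verbatim and tells us that for each such $C$ the coefficient map
\[
K_q(F[C])\otimes\Q\to MK_q(\C[C])^{\hom(F(\zeta_m),\C)}\otimes\Q=\CT_q(\C[C])^{\hom(F(\zeta_m),\C)}\otimes\Q
\]
is injective, where the last identification uses Proposition~\ref{prop:mk=ct} since $\C[C]$ is finite dimensional and $q\ge1$ (the summands with $q\le 0$ contribute trivially to the source, as $K_q(F[C])\otimes\Q$ vanishes rationally in negative degrees and in degree $0$ maps into $\CT_0$ injectively).

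Granting the summand-wise injectivity, the conclusion follows by exactness of the functor $H_p(Z_GC,\Q)\otimes_{\Q[W_GC]}\theta_C\cdot(-)$ applied to a $\Q$-vector-space inclusion (the idempotent $\theta_C$ splits off a direct summand and tensoring over $\Q$ with the flat $\Q$-module $H_p(Z_GC,\Q)$ preserves monomorphisms), together with the fact that a direct sum of monomorphisms is a monomorphism. The main obstacle, and the step deserving genuine care, is verifying that \eqref{map:equiregFm} really does respect the two $\con_f$- and $(\fcyc)$-indexed decompositions so that it is diagonal in the cyclic-subgroup index---that is, that the regulator is compatible with the Chern-character splittings on both the $K$ and the $\CT$ side. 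This is exactly the naturality that was already exploited at the end of the proof of Proposition~\ref{prop:compuhgctc}, where $\Tr Sch$ was shown to act summand-wise; the same source of compatibility (naturality of the Chern-character computations in the coefficient ring, applied to the map $F[C]\to\C[C]$ and the character $\tau$) furnishes the diagonality here, and once that is in place the proof is immediate from Lemma~\ref{lem:regcyc}.
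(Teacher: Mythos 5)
Your proposal is correct and takes essentially the same route as the paper's proof: decompose $H_n^G(\cE(G,\cyc_m),K(F))\otimes\Q$ by L\"uck's equivariant Chern character (the paper cites \cite{luch}*{Theorem 0.3} for this step, rather than the references you name from Proposition \ref{prop:compuhgctc}), apply Lemma \ref{lem:regcyc} summand by summand, and then include the resulting sum into \eqref{hctcrat} as in Remark \ref{rem:cycm}. Your additional care in matching the proposition's $\zeta_m$ with the lemma's $\zeta_n$, and in handling the $q\le 0$ summands where $MK_q$ is undefined and one must use $\CT_q$ directly, addresses points the paper leaves implicit.
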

\begin{proof}
By \cite{luch}*{Theorem 0.3}, we have
\begin{equation}\label{hequikfcycn}
H^G_n(\cE(G,\cyc_m), K(F))=\bigoplus_{p+q=n}\bigoplus_{(C)\in(\cyc_m)}H_p(Z_GC,\Q)\otimes_{\Q[Z_GC]}\theta_C\cdot K_q(F[C])\otimes\Q.
\end{equation}
By Lemma \ref{lem:regcyc} the maps $K_q(F[C])\to \CT_q(\C[C])^{\hom(F(\zeta_m),\C)}$ with $C\in \cyc_m$ induce a rational 
monomorphism from \eqref{hequikfcycn} to 
\begin{equation}\label{hequictcycn}
\bigoplus_{p+q=n}\bigoplus_{(C)\in(\cyc_m)}H_p(Z_GC,\Q)\otimes_{\Q[Z_GC]}\theta_C\cdot \CT_q(\C[C])^{\hom(F(\zeta_m),\C)}.
\end{equation}
The map \eqref{map:equiregFm} tensored with $\Q$ is the composite of the above monomorphism with the inclusion of \eqref{hequictcycn} as a summand in \eqref{hctcrat}. 
\end{proof}

\section{Comparing conjectures and assembly maps}\label{sec:compa}
\numberwithin{equation}{section}
\begin{thm}\label{thm:forz}
Let $G$ be a group. 
Assume that the equivalent conditions of Proposition \ref{prop:equiva} hold for $G$ 
with $m=1$. Then the assembly map
\[
H^G_n(\cE(G,\{1\}),K(\Z))\to K_n(\Z[G])
\]
is rationally injective. In particular this is the case whenever $G$ satisfies the rational 
$KH$-isomorphism conjecture with $\cL^1$-coefficients.
\end{thm}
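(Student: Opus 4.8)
The plan is to derive the rational injectivity of the integral assembly map from the two injectivity statements already available, namely the rational injectivity of the equivariant regulator (Proposition~\ref{prop:monoequiregZ}) and the standing hypothesis that the rational $\CT(\C)$-assembly map is injective. The bridge between these is the naturality of the character $\ct$ together with the naturality of assembly maps in the coefficient $\org$-spectrum.

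The first thing to record is that $\cyc_1$ coincides with the trivial family $\{1\}$, since the only cyclic subgroup whose order divides $1$ is the trivial one. Thus the hypothesis --- condition~i) of Proposition~\ref{prop:equiva} with $m=1$ --- asserts precisely that the rational assembly map
\[
a_{\CT}\colon H^G_n(\cE(G,\{1\}),\CT(\C))\otimes\Q\to \CT_n(\C[G])\otimes\Q
\]
is injective. Next I would set up the square obtained by applying $\ct$, preceded by the change of coefficients along $\Z\subset\C$, to the source and target of the integral assembly map:
\[
\xymatrix{
H^G_n(\cE(G,\{1\}),K(\Z))\otimes\Q\ar[r]\ar[d]_{\psi}& K_n(\Z[G])\otimes\Q\ar[d]^{\rho}\\
H^G_n(\cE(G,\{1\}),\CT(\C))\otimes\Q\ar[r]_-{a_{\CT}}& \CT_n(\C[G])\otimes\Q
}
\]
Here $\rho$ is $K_n(\Z[G])\to K_n(\C[G])\overset{\ct}\to \CT_n(\C[G])$ and $\psi$ is the equivariant character $H^G_n(\cE(G,\{1\}),K(\Z))\to H^G_n(\cE(G,\{1\}),K(\C))\overset{\ct}\to H^G_n(\cE(G,\{1\}),\CT(\C))$; the square commutes because assembly is natural in the coefficient $\org$-spectrum and $\ct$ is a natural transformation, which is where the promotion of diagram~\eqref{diag:deftau} to $\org$-spectra from \cite{cortar} enters.

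I would then observe that $\psi$ is rationally injective: by naturality of $\ct$ in the space variable, the map \eqref{map:equiregZ} of Proposition~\ref{prop:monoequiregZ} factors as $\psi$ followed by the map $H^G_n(\cE(G,\{1\}),\CT(\C))\to H^G_n(\cE(G,\fin),\CT(\C))$ induced by $\cE(G,\{1\})\to\cE(G,\fin)$, so the injectivity of \eqref{map:equiregZ} forces the injectivity of $\psi$. With this the conclusion is a one-line diagram chase: $a_{\CT}\circ\psi$ is injective as a composite of injections, and by commutativity it equals $\rho$ composed with the integral assembly map, whence the latter is injective after $\otimes\Q$. For the final sentence I would invoke Corollary~\ref{coro:fjct}: if $G$ satisfies the rational $KH$-isomorphism conjecture with $\cL^1$-coefficients, then \eqref{map:fjct} is surjective for every $p$, so choosing $p\equiv n+1\bmod 2$ gives condition~i) of Proposition~\ref{prop:equiva} for $m=1$ at the given $n$, and the main assertion applies.

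The substantive inputs are all in hand, so the genuine point requiring care --- rather than a real obstacle --- is verifying that the square commutes and that $\psi$ is literally the first factor of \eqref{map:equiregZ}; both reduce entirely to the naturality of $\ct$ as an $\org$-spectrum transformation and the identification $\cyc_1=\{1\}$, so beyond this bookkeeping no new computation is needed.
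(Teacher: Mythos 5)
Your proposal is correct and follows essentially the same route as the paper: the paper's proof just declares the theorem immediate from Proposition \ref{prop:monoequiregZ} and Corollary \ref{coro:fjct}, and your argument --- identifying $\cyc_1=\{1\}$, factoring \eqref{map:equiregZ} as $\psi$ followed by the map induced by $\cE(G,\{1\})\to\cE(G,\fin)$ (which, by Remark \ref{rem:cycm}, is a split inclusion of summands), and chasing the naturality square relating the $K(\Z)$- and $\CT(\C)$-assembly maps --- is exactly the bookkeeping that makes it immediate. No gaps to report.
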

\begin{proof}
Immediate from Proposition \ref{prop:monoequiregZ} and Corollary \ref{coro:fjct}. 
\end{proof}

\begin{thm}\label{thm:forf}
Let $G$ be a group and $m\ge 1$. Assume that the equivalent conditions of Proposition \ref{prop:equiva} 
hold for $G$ and $m$. Then for every number field $F$, the assembly map
\[
 H^G_n(\cE(G,\cyc_m),K(F))\to K_n(F[G])
\]
is rationally injective. If moreover the condition holds for all $m$ --as is the case, for example, if $G$ satisfies the rational $KH$-isomorphism conjecture with $\cL^1$-coefficients- then $G$ satisfies the rational injectivity part of the $K$-theory isomorphism conjecture with coefficients in any number 
field.
\end{thm}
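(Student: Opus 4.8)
The plan is to follow the pattern of Theorem \ref{thm:forz}, now drawing on Proposition \ref{prop:monoequiregF} in place of Proposition \ref{prop:monoequiregZ} and feeding the result into Proposition \ref{prop:equiva}. First I would write down the square produced by the naturality of the equivariant character $\tau$ of \eqref{map:equitau} and of the assembly map with respect to the ring homomorphism $F\to F(\zeta_m)\overset\iota\to \C^{\hom(F(\zeta_m),\C)}$:
\[
\xymatrix{
H^G_n(\cE(G,\cyc_m),K(F))\otimes\Q\ar[r]\ar[d] & K_n(F[G])\otimes\Q\ar[d]\\
H^G_n(\cE(G,\cyc_m),\CT(\C))^{\hom(F(\zeta_m),\C)}\otimes\Q\ar[r] & \CT_n(\C[G])^{\hom(F(\zeta_m),\C)}\otimes\Q
}
\]
The top arrow is the $K(F)$-assembly map whose rational injectivity is asserted, and the bottom arrow is the $\hom(F(\zeta_m),\C)$-fold power of the assembly map \eqref{map:assCT}.

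Next I would argue that the left-hand vertical map is a rational monomorphism. By Remark \ref{rem:cycm} the target $H^G_n(\cE(G,\cyc_m),\CT(\C))^{\hom(F(\zeta_m),\C)}\otimes\Q$ is identified with the summand \eqref{hequictcycn} of $H^G_n(\cE(G,\fcyc),\CT(\C))^{\hom(F(\zeta_m),\C)}\otimes\Q$, and the proof of Proposition \ref{prop:monoequiregF} exhibits \eqref{map:equiregFm} as this left-hand vertical map followed by the inclusion of \eqref{hequictcycn} as a direct summand. Since \eqref{map:equiregFm} is rationally injective and the inclusion of a summand is injective, the left-hand vertical map is rationally injective.

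Assuming now the equivalent conditions of Proposition \ref{prop:equiva} for $G$ and $m$, the map \eqref{map:assCT} is rationally injective, hence so is its finite power, the bottom arrow. The composite down-then-right is then injective, and by commutativity it equals the composite right-then-down; therefore the top arrow is rationally injective, which is the first assertion. For the final statement I would pass to the colimit over $m$ in the divisibility order: by Remark \ref{rem:cycm} applied to the $\org$-spectrum $K(F)$ one has $\colim_m H^G_n(\cE(G,\cyc_m),K(F))=H^G_n(\cE(G,\fcyc),K(F))$, while the targets $K_n(F[G])$ are constant in $m$, so exactness of filtered colimits of $\Q$-vector spaces turns the family of injective assembly maps into an injective map $H^G_n(\cE(G,\fcyc),K(F))\otimes\Q\to K_n(F[G])\otimes\Q$. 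Replacing $\fcyc$ by $\fin$ via the isomorphism of \cite{LR2}*{Proposition 2.20} gives the rational injectivity part of the $K$-theory isomorphism conjecture with $F$-coefficients; the stated example follows because, by Corollary \ref{coro:fjct}, the $KH$-isomorphism conjecture with $\cL^1$-coefficients forces the conditions of Proposition \ref{prop:equiva} for every $m$.

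I do not expect a genuine obstacle here: all of the arithmetic input (Karoubi's regulator injectivity) and all of the homotopy-theoretic comparison have already been absorbed into Propositions \ref{prop:monoequiregF} and \ref{prop:equiva}. The only point demanding care is the bookkeeping in the second paragraph --- verifying that the equivariant regulator \eqref{map:equiregFm} lands in the $\cyc_m$-indexed part of $H^G_*(\cE(G,\fcyc),\CT(\C))$, so that the $\cyc_m$-version of Proposition \ref{prop:equiva} is the relevant injectivity statement --- and this is precisely what Remark \ref{rem:cycm} together with the proof of Proposition \ref{prop:monoequiregF} supplies.
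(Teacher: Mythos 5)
Your proof is correct and follows essentially the same route as the paper, whose own proof is simply ``immediate from Proposition \ref{prop:monoequiregF} and Corollary \ref{coro:fjct}'': you have fleshed out exactly that argument, using the factorization of \eqref{map:equiregFm} through the $\cyc_m$-summand (Remark \ref{rem:cycm}) to get injectivity of the left vertical map, the hypothesis of Proposition \ref{prop:equiva} for the bottom map, and the colimit over $m$ together with \cite{LR2}*{Proposition 2.20} for the final clause. The bookkeeping you flag as the delicate point is handled correctly.
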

\begin{proof}
Immediate from Proposition \ref{prop:monoequiregF} and Corollary \ref{coro:fjct}.
\end{proof}

\begin{bibdiv}
\begin{biblist}

\bib{balukh}{article}{
   author={Bartels, Arthur},
   author={L{\"u}ck, Wolfgang},
   title={Isomorphism conjecture for homotopy $K$-theory and groups acting
   on trees},
   journal={J. Pure Appl. Algebra},
   volume={205},
   date={2006},
   number={3},
   pages={660--696},
}

\bib{friendly}{article}{
   author={Corti{\~n}as, Guillermo},
   title={Algebraic v. topological $K$-theory: a friendly match},
   conference={
      title={Topics in algebraic and topological $K$-theory},
   },
   book={
      series={Lecture Notes in Math.},
      volume={2008},
      publisher={Springer},
      place={Berlin},
   },
   date={2011},
   pages={103--165},
}

\bib{corel}{article}{
author={Corti{\~n}as, Guillermo},
author={Ellis, Eugenia},
title={Isomorphism conjectures with proper coefficients},
eprint={arXiv:1108.5196v3},
}
\bib{cortar}{article}{
author={Corti{\~n}as, Guillermo},
author={Tartaglia, Gisela},
title={Operator ideals and assembly maps in K-theory.},
journal={Proc. Amer. Math. Soc. to appear.},
}
\bib{CT}{article}{
   author={Corti{\~n}as, Guillermo},
   author={Thom, Andreas},
   title={Comparison between algebraic and topological $K$-theory of locally
   convex algebras},
   journal={Adv. Math.},
   volume={218},
   date={2008},
   number={1},
   pages={266--307},
}

\bib{cq}{article}{
   author={Cuntz, Joachim},
   author={Quillen, Daniel},
   title={Excision in bivariant periodic cyclic cohomology},
   journal={Invent. Math.},
   volume={127},
   date={1997},
   number={1},
   pages={67--98},
}

\bib{karast}{article}{
   author={Karoubi, Max},
   title={Homologie cyclique et $K$-th\'eorie},
   language={French, with English summary},
   journal={Ast\'erisque},
   number={149},
   date={1987},
   pages={147},
}

\bib{karmult}{article}{
   author={Karoubi, Max},
   title={Sur la $K$-th\'eorie multiplicative},
   language={French, with English summary},
   conference={
      title={Cyclic cohomology and noncommutative geometry (Waterloo, ON,
      1995)},
   },
   book={
      series={Fields Inst. Commun.},
      volume={17},
      publisher={Amer. Math. Soc.},
      place={Providence, RI},
   },
   date={1997},
   pages={59--77},
}

\bib{luch}{article}{
   author={L{\"u}ck, Wolfgang},
   title={Chern characters for proper equivariant homology theories and
   applications to $K$- and $L$-theory},
   journal={J. Reine Angew. Math.},
   volume={543},
   date={2002},
   pages={193--234},
}

\bib{lurela}{article}{
   author={L{\"u}ck, Wolfgang},
   title={The relation between the Baum-Connes conjecture and the trace
   conjecture},
   journal={Invent. Math.},
   volume={149},
   date={2002},
   number={1},
   pages={123--152},
}

\bib{LR2}{article}{
   author={L{\"u}ck, Wolfgang},
   author={Reich, Holger},
   title={The Baum-Connes and the Farrell-Jones conjectures in $K$- and
   $L$-theory},
   conference={
      title={Handbook of $K$-theory. Vol. 1, 2},
   },
   book={
      publisher={Springer},
      place={Berlin},
   },
   date={2005},
   pages={703--842},
}

\bib{LR1}{article}{
   author={L{\"u}ck, Wolfgang},
   author={Reich, Holger},
   title={Detecting $K$-theory by cyclic homology},
   journal={Proc. London Math. Soc. (3)},
   volume={93},
   date={2006},
   number={3},
   pages={593--634},
}

\bib{kh}{article}{
   author={Weibel, Charles A.},
   title={Homotopy algebraic $K$-theory},
   conference={
      title={Algebraic $K$-theory and algebraic number theory (Honolulu, HI,
      1987)},
   },
   book={
      series={Contemp. Math.},
      volume={83},
      publisher={Amer. Math. Soc.},
      place={Providence, RI},
   },
   date={1989},
   pages={461--488},
}

\bib{yu}{article}{
author={Yu, Guoliang},
title={The algebraic K-theory Novikov conjecture for group algebras over the ring of Schatten class operators},
eprint={arXiv:1106.3796},
}

\end{biblist}
\end{bibdiv}
\end{document}